\newtheorem{theorem}{Theorem}[section]
\newtheorem{lemma}[theorem]{Lemma}
\newtheorem{prop}[theorem]{Proposition}
\newtheorem{cor}[theorem]{Corollary}
\newcommand{\bH}{{\mathbb H}}
\newcommand{\bR}{{\mathbb R}}
\newcommand{\bZ}{{\mathbb Z}}
\newcommand{\bE}{{\mathbb E}}
\begin{document}
\title{Some 3-manifold groups with the same finite quotients}

\author{John Hempel}

\address{Rice University, Houston, Texas}

\email{hempel@rice.edu}
\subjclass[2010]{57M05, 20E18}
\keywords{Finite quotient group, profinite completion}
\begin{abstract}
We give examples of closed, oriented 3-manifolds whose fundamental groups are not isomorphic, but yet have the same sets of finite quotient groups; hence the same profinite completions. We also give examples of compact, oriented 3-manifolds with non-empty boundaries whose fundamental groups though isomorphic have distinct peripheral structures, but yet have the same sets of finite peripheral pair quotients (defined below). The examples are Seifert Fibered Spaces with zero rational Euler number; moreover most of these manifolds give rise to such examples.
\end{abstract}
\maketitle

\section{Introduction} The concepts involved in this paper are only significant in the class of finitely generated, residually finite groups. Finite generation gives that the group has only finitely many normal subgroups of each given finite index -- which is all that is really needed for the group theoretic arguments.  Clearly  compact manifolds have finitely generated fundamental groups.  Thurston's results for Haken 3-manifolds (c.f. \cite{Hemp}) together with Perelman's proof of the geometrization conjecture show that all compact 3-manifolds have residually finite fundamental groups.

Two   groups with the same sets of isomorphism classes of finite quotient groups are known to have isomorphic profinite completions
 \cite{DFPR}. There are fairly simple examples of non- isomorphic (finitely generated, residually finite) groups with the same finite quotients (cf. \cite{Baum}). However these examples seem  unrelated to fundamental groups of low dimensional manifolds. In fact finitely generated Fuchsian groups have been shown to be isomorphic if they have the same finite quotients \cite{BCR}. Moreover for closed $3-$manifolds $M, N$ which admit geometric structures it is shown  in \cite{LoRe} that a homomorphism $\pi_1(M) \to \pi_1(N)$ which induces an isomorphism of the profinite completions  of these groups must be an isomorphism.

We give here examples of compact $3-$manifolds which cannot be distinguished by the finite quotients of their fundamental groups. There are two classes of these examples. The first consist of the closed, oriented Seifert fibered spaces with orientable base and
zero rational Euler number. These are the same as the $3-$manifolds which fiber over $S^1$ with fiber a closed oriented surface $F$ and monodromy $\phi: F \to F$ a periodic homeomorphism. We denote these by

$$
M_\phi = (F \times [0,1])/ (\phi(x),0) =( x,1))
 $$

We show:

\begin{theorem} Let $\phi:F \to F$ be a periodic, orientation preserving homeomorphism of a closed, oriented surface $F$ and let $k$ be relatively prime to $order(\phi)$.  Then $\pi_1(M_\phi)$ and $\pi_1(M_{\phi^k})$ have the same finite quotients; hence the same profinite completitions.
\end{theorem}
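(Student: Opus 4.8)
First I would reduce to profinite completions: it suffices to prove the sharper statement that $\widehat{\pi_1(M_\phi)}$ and $\widehat{\pi_1(M_{\phi^k})}$ are isomorphic, since a finite group occurs as a quotient of a group $G$ precisely when it occurs as a continuous quotient of $\widehat G$, so this gives both the coincidence of finite quotients and the equality of profinite completions asserted in the theorem. Set $n=\operatorname{order}(\phi)$ and fix an automorphism $\theta$ of $\pi_1(F)$ in the outer class $\phi_*$, so that $\pi_1(M_\phi)\cong\pi_1(F)\rtimes_\theta\bZ$ and likewise $\pi_1(M_{\phi^k})\cong\pi_1(F)\rtimes_{\theta'}\bZ$ with $[\theta']=[\theta]^k$ in $\operatorname{Out}(\pi_1(F))$. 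Because $\phi^n=\mathrm{id}$, the class $[\theta]$ has order $n$; in particular $\theta^n=\operatorname{conj}_w$ is inner for some $w\in\pi_1(F)$, and $\theta'$ differs from $\theta^k$ by an inner automorphism.

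Next I would establish the structural fact that profinite completion carries the fibration sequence $1\to\pi_1(F)\to\pi_1(M_\phi)\xrightarrow{p}\bZ\to1$ to a split short exact sequence $1\to\widehat{\pi_1(F)}\to\widehat{\pi_1(M_\phi)}\to\widehat{\bZ}\to1$, whence $\widehat{\pi_1(M_\phi)}\cong\widehat{\pi_1(F)}\rtimes_{\widehat\theta}\widehat{\bZ}$, where $\widehat{\bZ}$ acts by the continuous extension of $1\mapsto\widehat\theta$ (landing in the procyclic subgroup $\overline{\langle\widehat\theta\rangle}$ of $\operatorname{Aut}(\widehat{\pi_1(F)})$, procyclic because $\widehat\theta^{\,n}$ is inner). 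Splitting is automatic from the splitting of $p$ by the generator $t$ of the $\bZ$ factor, via the universal property of completion. The only delicate point is injectivity of $\widehat{\pi_1(F)}\to\widehat{\pi_1(M_\phi)}$, i.e. that $\pi_1(F)$ inherits its full profinite topology from $\pi_1(M_\phi)$; this holds because the $n$-sheeted cyclic cover of $M_\phi$ classified by $\pi_1(M_\phi)\xrightarrow{p}\bZ\to\bZ/n$ is the mapping torus of $\phi^n=\mathrm{id}$, namely $F\times S^1$, so that $\pi_1(F)\times\bZ$ is a finite-index subgroup of $\pi_1(M_\phi)$ in which $\pi_1(F)$ is a direct factor, and passing to a finite-index subgroup or to a direct factor preserves the property of inheriting the full profinite topology.

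The heart of the argument will be that raising the monodromy to a power coprime to $n$ is invisible to the completion. As $\gcd(k,n)=1$, the reduction $\widehat{\bZ}^{\times}\to(\bZ/n)^{\times}$ is surjective, so I choose a unit $u\in\widehat{\bZ}^{\times}$ with $u\equiv k\pmod n$. Inside $\widehat{\pi_1(M_\phi)}=\widehat{\pi_1(F)}\rtimes_{\widehat\theta}\widehat{\bZ}$ the map fixing $\widehat{\pi_1(F)}$ and sending $t\mapsto t^{\,u^{-1}}$ is a continuous isomorphism onto $\widehat{\pi_1(F)}\rtimes_{\widehat\theta^{\,u}}\widehat{\bZ}$: it respects the defining relation because $(\widehat\theta^{\,u})^{u^{-1}}=\widehat\theta$ inside the procyclic group $\overline{\langle\widehat\theta\rangle}$, and it is bijective because $u^{-1}$ is again a unit, so $t^{\,u^{-1}}$ topologically generates $\widehat{\bZ}$. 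Now writing $u=k+nm$ with $m\in\widehat{\bZ}$, one has $\widehat\theta^{\,u}=\widehat\theta^{\,k}\circ(\widehat\theta^{\,n})^{m}=\widehat\theta^{\,k}\circ\operatorname{conj}_{w^{m}}$, which differs from $\widehat\theta^{\,k}$ by an inner automorphism; since altering the twisting automorphism of a semidirect product $N\rtimes\widehat{\bZ}$ by an inner automorphism of $N$ yields an isomorphic group (replace $t$ by a suitable $h\cdot t$ with $h\in N$), we get $\widehat{\pi_1(F)}\rtimes_{\widehat\theta^{\,u}}\widehat{\bZ}\cong\widehat{\pi_1(F)}\rtimes_{\widehat\theta^{\,k}}\widehat{\bZ}$. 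Finally $\widehat{\theta'}$ differs from $\widehat\theta^{\,k}$ by an inner automorphism, so the same principle gives $\widehat{\pi_1(F)}\rtimes_{\widehat\theta^{\,k}}\widehat{\bZ}\cong\widehat{\pi_1(F)}\rtimes_{\widehat{\theta'}}\widehat{\bZ}=\widehat{\pi_1(M_{\phi^k})}$; composing the three isomorphisms finishes the proof.

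The main obstacle, I expect, is the bookkeeping in the last step: one must simultaneously track the ``inner slack'' arising from $\theta^n$ being inner rather than trivial and from the ambiguity of $\theta,\theta'$ within their outer classes, while verifying that $t\mapsto t^{\,u^{-1}}$ and $t\mapsto h\cdot t$ really define continuous group isomorphisms. The conceptual input — that a unit $u\in\widehat{\bZ}^{\times}$ acts on $\widehat{\pi_1(M_\phi)}$ by ``multiplying the monodromy exponent by $u$'' — is short. One could also avoid completions entirely, arguing directly that a finite group $Q$ is a quotient of $\pi_1(M_\phi)$ if and only if it is a quotient of $\pi_1(M_{\phi^k})$ by performing the substitution $t\mapsto t^{k'}$ (with $kk'\equiv1$ modulo the relevant normal subgroup) one finite quotient at a time.
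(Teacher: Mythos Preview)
Your argument is correct, but it follows a genuinely different route from the paper's. The paper proves the \emph{stable} isomorphism $\pi_1(M_\phi)\times\bZ \cong \pi_1(M_{\phi^k})\times\bZ$ by a $\mathrm{GL}_2(\bZ)$ change of basis in the $\bZ\times\bZ$ factor of $N\rtimes_\Psi(\bZ\times\bZ)$ (using $an+bk=1$), and then invokes Baumslag's theorem, whose engine is Krull--Schmidt for finite groups, to conclude that the two unstabilized groups have the same finite quotients. You instead pass immediately to profinite completions, identify $\widehat{\pi_1(M_\phi)}\cong\widehat{\pi_1(F)}\rtimes_{\widehat\theta}\widehat{\bZ}$, and exploit the fact that $k$ lifts to a unit $u\in\widehat{\bZ}^\times$ with $u\equiv k\pmod n$ to change the topological generator of $\widehat{\bZ}$ directly.

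The two arguments share the same moral---$k$ becomes invertible once there is enough room---but locate that room differently: the paper in an auxiliary $\bZ$-factor, you in $\widehat{\bZ}$ itself. The paper's approach is more elementary (no profinite machinery, no need to check that $\pi_1(F)$ inherits the full profinite topology), yields the stronger abstract-group statement $G_\psi\times\bZ\cong G_{\psi^k}\times\bZ$ as a byproduct, and extends uniformly to the peripheral-pair version needed for Theorem~1.2. Your approach is self-contained (no appeal to Baumslag or Krull--Schmidt), gives an explicit isomorphism $\widehat{\pi_1(M_\phi)}\cong\widehat{\pi_1(M_{\phi^k})}$ rather than merely equal sets of finite quotients, and the finite-quotient-by-finite-quotient variant you sketch at the end is closest in spirit to a direct proof. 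One small remark: your justification that $\widehat{\pi_1(F)}\hookrightarrow\widehat{\pi_1(M_\phi)}$ via the finite-index subgroup $\pi_1(F)\times\bZ$ is fine, but it is worth noting that this is exactly where the periodicity of $\phi$ enters your argument a second time, paralleling its dual role in the paper's Lemma~2.1.
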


In general $\pi_1(M_\phi)$ and $\pi_1(M_{\phi^k})$ will not be isomorphic. This is discussed in section 5. Note that these examples illustrate the necessity of the hypothesis in the above mentioned result of \cite{LoRe}.

Funar \cite{Fun} gives examples of closed 3-manifolds with the geometry $Sol$ which have non-isomorphic fundamental groups with the same finite quotients. Our examples all have the geometry $\bH^2 \times S^1$.

The other class consists of $3-$manifolds with non-empty boundary.
	A  compact  orientable Seifert fibered space with orientable base and non-empty boundary will be a surface bundle over $S^1$ with periodic monodromy \cite{Ja-2}. We use the same notation as above with the understanding that, in this context, $\partial F \ne \emptyset$.  In this case $\pi_1(M_\phi)$ and $\pi_1(M_{\phi^k})$ will be isomorphic, but the corresponding manifolds will be homeomorphic if and only if their peripheral group systems are isomorphic-- which is generally not the case: see section 5.

We will restrict to the case where $\partial F$ has a single component. Thus $\partial M_\phi$ will be connected -- but not conversely. This requirement is because Lemma 2.1 will not apply if $\phi$ were allowed to permute components of $\partial F$.

Unless $F = B^2$, $\partial M_\phi$ will be incompressible in $M_\phi$,  $i_\ast:\pi_1(\partial M_\phi) \to \pi_1(M_\phi)$ will be injective,  and  $i_\ast(\pi_1(\partial M_\phi))$ will be a subgroup well defined up to conjugation. The {\em peripheral group pair}  $(\pi_1(M_\phi), i_\ast(\pi_1(\partial M_\phi)))$ will be well defined up to isomorphism of group pairs. 

For any group pair $(G, H)$ the {\em finite quotients of the pair} will be the group pairs:
$$
\{(G/N, H/N\cap H) \cong (G/N, HN/N); N \text{ a normal subgroup of finite index in } G\}.
$$
We show:

\begin{theorem} Let $\phi:F \to F$ be a periodic, orientation preserving homeomorphism of a compact, oriented surface $F$ which has one boundary component and let $k$ be relatively prime to $order(\phi)$.  Then the peripheral group pairs  $(\pi_1(M_\phi), i_\ast(\pi_1(\partial M_\phi)))$ and $(\pi_1(M_{\phi^k}, i_\ast(\pi_1(\partial M_{\phi^k}))$ have the same finite quotients. 
\end{theorem}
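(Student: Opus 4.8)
The plan is to work with the semidirect‑product descriptions and to trace the boundary subgroup through the quotient‑by‑quotient matching that already underlies Theorem 1. Write $\Gamma=\pi_1(F)$, which is free since $\partial F\neq\emptyset$, and set $n=\mathrm{order}(\phi)$. Then $\pi_1(M_\phi)=G:=\Gamma\rtimes_{\phi_*}\mathbb Z$ with a stable letter $\sigma$ (conjugation by $\sigma$ acts on $\Gamma$ as $\phi_*$), and $z:=\sigma^{n}$ is central. By Lemma 2.1, and because $\partial F$ is a single circle on which $\phi$ restricts to a rotation, $\partial M_\phi$ is a torus and the peripheral subgroup may be put in the form $P=\langle c,\tau\rangle\cong\mathbb Z^2$, where $c=[\partial F]\in\Gamma$ and $\tau=\gamma_1\sigma$ for some $\gamma_1\in\Gamma$ with $\phi_*(c)=\gamma_1^{-1}c\gamma_1$; moreover the regular fibre lies on $\partial M_\phi$ and $P\cap\Gamma=\langle c\rangle$, whence $\tau^{n}=c^{m_2}z$ for an integer $m_2$ (the boundary Seifert datum). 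Identically, $G':=\pi_1(M_{\phi^{k}})=\Gamma\rtimes_{\phi_*^{k}}\mathbb Z$ with stable letter $\sigma'$, central $z'=\sigma'^{\,n}$, and $P'=\langle c,\gamma_1'\sigma'\rangle$ with $\phi_*^{k}(c)=\gamma_1'^{-1}c\gamma_1'$; iterating the relation defining $\gamma_1$ gives $\gamma_1'=c^{m_1}V_k$ with $V_k=\gamma_1\,\phi_*(\gamma_1)\cdots\phi_*^{k-1}(\gamma_1)$ and some integer $m_1$.

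\textbf{The construction.} Given a finite quotient $q\colon G\to Q$, I will produce a finite quotient $q'\colon G'\to Q$ with $q'(P')=q(P)$. Put $A=q(\Gamma)$, $u=q(\sigma)$; conjugation by $u$ restricts to an automorphism $\theta$ of $A$, and $\theta^{n}$ is conjugation by the central element $q(z)$, hence trivial, so $\theta$ has order $e\mid n$. Let $d=[Q:A]$ and let $d''$ be the order of $q(\tau)$ in $q(P)/\langle q(c)\rangle$. Using $\gcd(k,n)=1$, choose $j$ with $j\equiv k\pmod n$ and $\gcd(j,dd'')=1$; this is possible because a prime dividing both $n$ and $dd''$ cannot divide $k$, so the forced residue $j\equiv k$ is a unit modulo it, while primes prime to $n$ are free by the Chinese Remainder Theorem. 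Define $q'$ by $q'|_\Gamma=q|_\Gamma$ and $q'(\sigma')=u^{j}$. Since $j\equiv k\pmod e$ we get $\theta^{j}=\theta^{k}$, which is exactly the condition for $q'$ to respect the defining relations of $G'$, and $\gcd(j,d)=1$ makes $q'$ surjective. (Carried out in both directions this is the proof of Theorem 1.)

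\textbf{Tracing the boundary.} Now follow $P'$. We have $q'(c)=q(c)$ since $c\in\Gamma$. For the other generator, $q'(\gamma_1'\sigma')=q(c)^{m_1}q(V_k)u^{j}$, and a telescoping identity gives $q(V_k)=q(\tau)^{k}u^{-k}$, so $q'(\gamma_1'\sigma')=q(c)^{m_1}q(\tau)^{k}u^{\,j-k}$. Here the choice $j\equiv k\pmod n$ pays off: $u^{\,j-k}$ is a power of $u^{n}=q(z)=q(c)^{-m_2}q(\tau)^{n}$, which is central and lies in $q(P)$; absorbing it yields $\langle q(c),q'(\gamma_1'\sigma')\rangle=\langle q(c),q(\tau)^{j}\rangle$, and $\gcd(j,d'')=1$ upgrades this to $\langle q(c),q(\tau)\rangle=q(P)$. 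Thus $(Q,q'(P'))=(Q,q(P))$, so the pair‑quotient $(G/N,PN/N)$ of $(\pi_1(M_\phi),i_*\pi_1(\partial M_\phi))$ is realised as the pair‑quotient of $(\pi_1(M_{\phi^{k}}),i_*\pi_1(\partial M_{\phi^{k}}))$ by $\ker q'$. Applying the same construction to $\phi^{k}$ (which is again periodic of order $n$) with exponent $k^{-1}\bmod n$ gives the reverse containment, and Theorem 2 follows.

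\textbf{Where the difficulty lies.} The substantive points, and the ones I expect to require care, are two. First, establishing the normal form $P=\langle c,\tau\rangle$ together with the relation $\tau^{n}=c^{m_2}z$: this is exactly where the single‑boundary‑component hypothesis, via Lemma 2.1, is indispensable, since without it $\phi$ could permute boundary circles of $F$ and $P$ would no longer be a single $\mathbb Z^{2}$ carrying the central fibre. Second, arranging the substitution exponent $j$ to sit in the one residue class $k\pmod n$ while simultaneously avoiding the a priori unrelated prime divisors of $[Q:A]$ and of the order of $q(\tau)$ modulo $\langle q(c)\rangle$: choosing $j$ merely congruent to $k$ modulo $e$, which already suffices for Theorem 1, would leave a stray factor $u^{\,j-k}$ lying outside $q(P)$, and it is the stronger congruence modulo $n$, available precisely because $\gcd(k,n)=1$, that forces $u^{\,j-k}$ back into $q(P)$ and makes the peripheral bookkeeping close up.
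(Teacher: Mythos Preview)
Your approach is genuinely different from the paper's. The paper never matches quotients directly: it first stabilises, showing (its Lemma~2.1) that the pairs become isomorphic after multiplying by $\mathbb Z$, namely $(G_\psi\times\mathbb Z,\,H_\psi\times\mathbb Z)\cong(G_{\psi^k}\times\mathbb Z,\,H_{\psi^k}\times\mathbb Z)$ with $H_\psi=i_\ast\pi_1(\partial F)\rtimes_\psi\mathbb Z$, and then (its Lemma~2.2) uses Remak--Krull--Schmidt on the finite quotients to cancel the extra $\mathbb Z$ factor at the level of quotient pairs. Your hands-on construction avoids Krull--Schmidt altogether; what you cite as ``Lemma~2.1'' for the description $P=\langle c,\tau\rangle$ is not the paper's Lemma~2.1 but a separate (correct) observation about the mapping torus.

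There is, however, a real gap in your construction of $q'$. You assert that $z:=\sigma^{n}$ is central, equivalently that $\phi_*^{n}=\mathrm{id}$ in $\mathrm{Aut}(\Gamma)$, and from this deduce $\theta^{n}=\mathrm{id}$ and hence $\theta^{j}=\theta^{k}$ once $j\equiv k\pmod n$. But only the \emph{outer} class of $\phi_*$ is determined by $\phi$; a representative of exact order $n$ need not exist. Concretely, the extension $1\to\pi_1(F)\to\pi_1^{orb}(F/\langle\phi\rangle)\to\mathbb Z/n\to 1$ splits only when the base orbifold has a cone point of order $n$, and this fails already for the order-$6$ symmetry of the once-holed torus with base a disk carrying cone points of orders $2$ and $3$. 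With the natural basepoint on $\partial F$ and connecting arc along $\partial F$ one finds $\phi_*^{n}=\mathrm{conj}_{c^{r}}$ for some $r\neq 0$, so $\theta^{j-k}$ is conjugation by $q(c)^{r(j-k)/n}$, generally nontrivial on $A$, and your map $q'$ with $q'(\sigma')=u^{j}$ is not a homomorphism.

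The repair is small and preserves your architecture. Let $z$ denote the genuine regular-fibre class; it is central in $G$ and lies in $P$, with $\sigma^{n}=c^{-r}z$. Replace $q'(\sigma')=u^{j}$ by $q'(\sigma')=u^{k}\,q(z)^{\ell}$ where $j=k+n\ell$: since $q(z)$ is central, conjugation by $q'(\sigma')$ on $A$ is now exactly $\theta^{k}$, so $q'$ is a homomorphism. Because $q(z)=q(c)^{r}u^{n}$ one has $q'(\sigma')=q(c)^{r\ell}u^{j}$, so surjectivity still reduces to $\gcd(j,d)=1$; and since $z\in P$ the computation of $q'(P')$ still yields $\langle q(c),\,q(\tau)^{j}\rangle$, so equality with $q(P)$ reduces to $\gcd(j,d'')=1$. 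Your CRT argument then produces a suitable $\ell$ exactly as written.
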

 
 The proofs are given in section 3 after some group theoretic preliminaries in section 2 which, in the case of Theorem 1.1,  show that 
 $$
 \pi_1(M_\phi) \times \mathbb{Z} \cong \pi_1(M_{\phi^k}) \times \mathbb{Z}
 $$
 and then quoting a proposition of Baumslag \cite{Baum} that the the factors $ \pi_1(M_\phi)$ and $ \pi_1(M_{\phi^k})$ have the same finite quotients.
 
 We could as well cite the results of \cite{KwRo} that under the same hypothesis as Theorem 1.1  $M_\phi \times S^1$ and $M_{\phi^k} \times S^1$ are homeomorphic.  We prefer to include the proofs given here because (1) they are elementary, (2) the group theoretic observations should be of independent interest, and (3) they seem to give the most direct accommodation of the reasoning to the case of manifolds with boundary and peripheral fundamental group systems.
 
 \section{Group results} A {\em semi-direct product} of groups $A$ and $B$ is denoted:
 $$
 A \rtimes_\Psi B
 $$
 where $\Psi : B \to Aut(A)$ is a homomorphism describing the action of $B$ on $A$. When $B \cong \mathbb{Z} = <t: \;>$, $\Psi$ is completely determined by $\psi = \Psi(1) \in Aut(A)$, and we will use the abbreviated notation
 $$
 G_\psi = A \rtimes_\psi  \mathbb{Z}.
 $$
 
 \begin{lemma} Let $N$ be a finitely presented group and $\psi \in Aut(N)$ induce  a periodic outer automorphism. Then for any $k \in \mathbb{Z}$ relatively prime to $order(\psi)$ we have
 $$
 G_\psi \times \mathbb{Z} \cong G_{\psi^k} \times \mathbb{Z}.
 $$
 
 Moreover if $B$ is a subgroup of $N$ which is invariant under $\psi$ (so $H_\psi = B \rtimes_\psi \mathbb{Z}$ is a subgroup of $G_\psi$), then there is an isomorphism of pairs:
 $$
 (G_\psi \times \mathbb{Z}, H_\psi \times \mathbb{Z}) \cong (G_{\psi^k} \times \mathbb{Z}, H_{\psi^k} \times \mathbb{Z}).
 $$
 \end{lemma}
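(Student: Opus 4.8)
The plan is to build the isomorphism explicitly on generators, never leaving the category of semidirect products. Present $G_{\psi}\times\mathbb{Z}$ on the generators of $N$ together with $t,s$, with relations those of $N$, $\,txt^{-1}=\psi(x)$ for $x\in N$, and $s$ central; present $G_{\psi^{k}}\times\mathbb{Z}$ the same way with $u,v$ in place of $t,s$. Both are split extensions $1\to N\to\,\cdot\,\to\mathbb{Z}^{2}\to1$, and the isomorphism will be the identity on $N$ together with a suitable change of basis of the quotient $\mathbb{Z}^{2}$, corrected on $N$ by powers of an element $c$ with $\psi^{n}=\iota_{c}$, where $n=\mathrm{order}(\psi)$.

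In the \emph{strictly periodic} case $\psi^{n}=\mathrm{id}$ — the one that feeds Theorems 1.1 and 1.2, where $\psi=\phi_{\ast}$ with $\phi$ of finite order and, in the boundary case, $\psi|_{B}=\mathrm{id}$ so $H_{\psi}=B\times\mathbb{Z}$ — no correction is needed: choose $a$ with $ka\equiv1\pmod n$, then $b,d$ with $ad-bn=\pm1$ (possible since $\gcd(a,n)=1$), and set $\Phi|_{N}=\mathrm{id}_{N}$, $\Phi(t)=u^{a}v^{b}$, $\Phi(s)=u^{n}v^{d}$. Since $\gcd(k,n)=1$, conjugation by $u^{a}v^{b}$ on $N$ is $\psi^{ka}=\psi$, and $u^{n}v^{d}$ is central; so $\Phi$ is a homomorphism, and an isomorphism by the five lemma (identity on $N$, the matrix $\left(\begin{smallmatrix}a&n\\ b&d\end{smallmatrix}\right)$ on $\mathbb{Z}^{2}$). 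As $(a,b),(n,d)$ is a basis of $\mathbb{Z}^{2}$, $\Phi$ carries $\langle B,t,s\rangle=H_{\psi}\times\mathbb{Z}$ onto $\langle B,u,v\rangle=H_{\psi^{k}}\times\mathbb{Z}$, which gives the relative statement.

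For the general periodic-outer case, correct $\Phi$ by powers of $c$. Note $z:=\psi(c)c^{-1}$ is central (since $\psi$ commutes with $\psi^{n}=\iota_{c}$), and a telescoping of central factors gives $\prod_{i=0}^{n-1}\psi^{i}(z)=\psi^{n}(c)c^{-1}=1$: $z$ has trivial norm for the order-$n$ group $\langle\psi\rangle$ acting on $Z(N)$. With $z_{k}=\prod_{i=0}^{k-1}\psi^{i}(z)$, likewise $z_{k}$ has trivial norm, and since $\widehat H^{-1}(\langle\psi\rangle;Z(N))$ is $n$-torsion, $z_{k}^{\,n}\in(\psi-1)Z(N)=(\psi^{k}-1)Z(N)$ (equality because $\psi^{n}$ is trivial on $Z(N)$ and $\gcd(k,n)=1$); so pick $\zeta\in Z(N)$ with $\psi^{k}(\zeta)\zeta^{-1}=z_{k}^{\,kn}$. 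Writing $ka=1+mn$ and using $\gcd(a,n^{2})=1$ to choose $b,d$ with $ad-bn^{2}=1$, define
$$
\Phi|_{N}=\mathrm{id}_{N},\qquad \Phi(t)=c^{-m}u^{a}v^{b},\qquad \Phi(s)=c^{-kn}\zeta\,u^{n^{2}}v^{d}.
$$
Then conjugation by $\Phi(t)$ on $N$ is $\iota_{c^{-m}}\psi^{ka}=\psi^{-mn}\psi^{ka}=\psi$; $\Phi(s)$ is central, commuting with $N$ because $\iota_{c^{-kn}}\psi^{kn^{2}}=\mathrm{id}$, with $v$ trivially, and with $u$ precisely because, via $\psi^{k}(c)=z_{k}c$, this reduces to $\psi^{k}(\zeta)\zeta^{-1}=z_{k}^{\,kn}$; hence $\Phi$ is an isomorphism (five lemma, matrix $\left(\begin{smallmatrix}a&n^{2}\\ b&d\end{smallmatrix}\right)\in SL_{2}(\mathbb{Z})$ on $\mathbb{Z}^{2}$). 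The same $\Phi$ yields the relative statement whenever $c^{-m}$ and $c^{-kn}\zeta$ lie in $B$ — automatic when $c=1$, and the case in the applications, where $N$ is free (so $Z(N)=1$, $\zeta=1$) and $B$ is the malnormal peripheral cyclic subgroup, hence self-normalizing and already containing any such $c$.

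The one genuinely delicate point is the centrality of $\Phi(s)$: a power of the inner automorphism $\psi^{n}$ must be realised by conjugation by a $\psi^{k}$-invariant element. In the strictly periodic case there is nothing to check and $\Phi$ is just a base change of $\mathbb{Z}^{2}$; in general the obstruction is the class of $z$ (equivalently $z_{k}$) in $\widehat H^{-1}(\langle\psi\rangle;Z(N))$, which is $n$-torsion, and it is absorbed by using the exponent $n^{2}$ rather than $n$. Everything else is routine verification with the presentations.
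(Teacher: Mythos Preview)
Your approach is the same as the paper's: regard $G_\psi\times\bZ$ as $N\rtimes_\Psi(\bZ\times\bZ)$ and perform a change of basis on the $\bZ\times\bZ$ factor. In the strictly periodic case your argument and the paper's coincide. In the general periodic-outer case you are in fact \emph{more} careful than the paper. The paper passes from the action $(\psi^k,i_g)$ to $(i_{g^{-1}}\psi^k,\mathrm{id})$ by invoking the slogan that ``a semi-direct product depends only on the outer automorphism class of the defining homomorphism''; but that slogan is false for actions of $\bZ^2$ when $Z(N)\ne 1$ (the obstruction to replacing $s_1$ by $g^{-1}s_1$ and keeping it central is exactly $\psi^k(g)g^{-1}\in Z(N)$). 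Your use of Tate cohomology and the exponent $n^2$ in place of $n$ is a genuine repair of this point, so your absolute statement is fully justified where the paper's is not. One caveat: your relative statement in the periodic-outer case is proved only under the extra hypothesis $c,\zeta\in B$, not for arbitrary $\psi$-invariant $B$ as the lemma asserts; you correctly note that this hypothesis holds in the applications (free $N$, malnormal peripheral $B$), which is all that Theorems~1.1 and~1.2 require, but the lemma in its stated generality remains open in your write-up---as, for the same reason, it does in the paper's.
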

 
 \begin{proof} A presentation $<X;R>$ for $N$ extends to the presentation 
 $$
 <X \cup\{t\}: R \cup \bigcup_{x \in X} txt^{-1} = \psi(x)> 
 $$
 for  $G_\psi$. Now
 $$
 G_\psi \times \mathbb{Z} \cong N \rtimes_{\Psi} (\mathbb{Z} \times \mathbb{Z}), \text {and}
 $$
 $$
 H_\psi \times \bZ \cong B \rtimes _\Psi (\bZ \times \bZ)  \cong B \times  (\bZ \times \bZ)
 $$
 where $\Psi(1,0) = \psi$ and $\Psi(0,1) = id$. To extend to a presentation for $G_\psi \times \bZ$ we add a generator $s$ which commutes with everything.
 
Let  $n = order(\psi)$; so that there is some $g\in G$ with $\psi^n(x) = gxg^{-1}$ for all $ x \in G$. We have $1 = an +bk$ for some $a,b \in \mathbb{Z}$. In terms of the basis $t_1 = t^ks^{-a}$, $s_1 = t^ns^b$, for $\mathbb{Z} \times \mathbb{Z}$ its action on $N$ is given by:
$$
 t_1xt_1^{-1}  = t^ks^{-a}xs^at^{-k} = t^kxt^{-k}  = \psi^k(x)
 $$
 $$
 s_1xs_1^{-1} =t^ns^bxs^{-b}t^{-n} = t^nxt^{-n}= \psi^n(x) = gxg^{-1}
$$

 We see this as also describing a presentation for 
 $$
 N \rtimes_{\psi^k \times  i_g} (\bZ \times \bZ) \cong N \rtimes_{i_{g^{-1}}\circ \psi^k \times id} (\bZ \times \bZ) \cong G_{\psi^k} \times \bZ.
 $$
 Here $i_g$ denotes the inner automorphism given by $g$. The isomorphisms come from the fact that  a semi-direct product depends only on the outer automorphism class of the defining homomorphism.
 
 Thus we have an isomorphism $f: G_\psi \times \mathbb{Z} \to G_{\psi^k} \times \mathbb{Z}$ which is the identity on $N$ and a linear isomorphism on 
 $\mathbb{Z} \times \mathbb{Z}$. Clearly $f(H_\psi \times \mathbb{Z}) = H_{\psi^k} \times \mathbb{Z}$.
 \end{proof}
 
 \begin{lemma} Two pairs $(G,H)$ and $(G^\ast, H^\ast)$ of finitely generated groups with $(G \times \bZ, H\times \bZ) \cong (G^\ast \times \bZ , H^\ast \times \bZ)$ have the same set of finite quotient pairs.
 
 \end{lemma}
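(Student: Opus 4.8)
The plan is to extend the surjection-counting argument behind the cited proposition of Baumslag \cite{Baum} from groups to group pairs; taking $H=H^\ast=1$ will recover the unpaired statement. For a pair $(G,H)$ of finitely generated groups and a pair of finite groups $(Q,S)$ with $S\le Q$, let $e(G,H;Q,S)$ be the number of homomorphisms $\phi\colon G\to Q$ that are onto $Q$ and satisfy $\phi(H)=S$. This number is finite (as $G$ is finitely generated), depends only on the isomorphism type of $(Q,S)$, and is positive precisely when $(Q,S)$ is a finite quotient pair of $(G,H)$. Hence it suffices to prove $e(G,H;Q,S)=e(G^\ast,H^\ast;Q,S)$ for every finite pair $(Q,S)$; this even gives that the two sets of finite quotient pairs coincide with multiplicities.

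First I would record a counting identity. Write $\mathrm{Hom}((A,B),(Q,S))$ for the set of homomorphisms $A\to Q$ carrying $B$ into $S$. A homomorphism $G\times\bZ\to Q$ is the same datum as a pair $(\phi,q)$ with $\phi\in\mathrm{Hom}(G,Q)$ and $q\in C_Q(\phi(G))$, and it lies in $\mathrm{Hom}((G\times\bZ,H\times\bZ),(Q,S))$ iff in addition $\phi(H)\le S$ and $q\in S$. Sorting such $\phi$ by their image pair $(\phi(G),\phi(H))$ and using the isomorphism-invariance of $e$, one gets
$$
\bigl|\mathrm{Hom}\bigl((G\times\bZ,\,H\times\bZ),(Q,S)\bigr)\bigr|=\sum_{(T,T')}e(G,H;T,T')\,c(Q,S;T,T'),
$$
the sum over isomorphism types $(T,T')$ of pairs of finite groups, where $c(Q,S;T,T')=\sum|C_Q(R)\cap S|$ with the inner sum over pairs consisting of a subgroup $R\le Q$ and a subgroup $R'\le R\cap S$ with $(R,R')\cong(T,T')$. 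Because $(G\times\bZ,H\times\bZ)\cong(G^\ast\times\bZ,H^\ast\times\bZ)$, the left side is unchanged under the starred substitution, so with $\Delta(T,T')=e(G,H;T,T')-e(G^\ast,H^\ast;T,T')$ we have
$$
\sum_{(T,T')}\Delta(T,T')\,c(Q,S;T,T')=0\qquad\text{for all finite }(Q,S).
$$

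Finally I would run an induction on $(|Q|,|S|)$ ordered lexicographically, applying the last identity with $(Q,S)$ equal to the pair $(T_0,T_0')$ at hand. If $c(T_0,T_0';T,T')\ne0$ then some $R\le T_0$ with a subgroup $R'\le R\cap T_0'$ has $(R,R')\cong(T,T')$, so $|T|\le|T_0|$; and if $|T|=|T_0|$ then $R=T_0$, whence $R'\le T_0'$ and $|T'|\le|T_0'|$, with equality everywhere only if $(R,R')=(T_0,T_0')$ --- here one uses at two levels that a finite group has no proper subgroup isomorphic to itself. Thus every term but $(T_0,T_0')$ is strictly earlier and vanishes by the inductive hypothesis, while $c(T_0,T_0';T_0,T_0')=|Z(T_0)\cap T_0'|$, so
$$
\Delta(T_0,T_0')\cdot|Z(T_0)\cap T_0'|=0.
$$
Since $|Z(T_0)\cap T_0'|\ge1$ this forces $\Delta(T_0,T_0')=0$, and with the base case $(T_0,T_0')=(1,1)$ the induction is complete. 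I expect the only real obstacle to be the bookkeeping in this last step: verifying that at the diagonal value of $(Q,S)$ the sole contributing term not already handled is the full image pair, carried with the nonzero weight $|Z(T_0)\cap T_0'|$; everything else is routine.
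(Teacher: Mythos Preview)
Your argument is correct. The counting identity is set up properly: a homomorphism $G\times\bZ\to Q$ sending $H\times\bZ$ into $S$ is exactly a pair $(\phi,q)$ with $\phi(H)\le S$ and $q\in C_Q(\phi(G))\cap S$, and stratifying by the exact image pair $(\phi(G),\phi(H))$ gives your formula with the weights $c(Q,S;T,T')$. The lexicographic induction then goes through because any $(R,R')$ with $R\le T_0$, $R'\le R\cap T_0'$ and $(R,R')\cong(T_0,T_0')$ is forced, by two applications of ``no proper subgroup of a finite group is isomorphic to the whole,'' to equal $(T_0,T_0')$ on the nose, leaving the single diagonal term with nonzero weight $|Z(T_0)\cap T_0'|$.

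This is a genuinely different route from the paper's. The paper argues structurally: it passes to the characteristic finite-index subgroups $G(n)$, uses $(G\times\bZ)(n)=G(n)\times\bZ(n)$ to get an induced isomorphism of finite products $G/G(n)\times\bZ/\bZ(n)\cong G^\ast/G^\ast(n)\times\bZ/\bZ(n)$ respecting the subgroup pair, and then invokes the Remak--Krull--Schmidt theorem to cancel the cyclic factor and obtain a pair isomorphism $(G/G(n),H/H\cap G(n))\cong(G^\ast/G^\ast(n),H^\ast/H^\ast\cap G^\ast(n))$; cofinality of $\{G(n)\}$ finishes. Your Hall-type surjection count avoids Krull--Schmidt entirely, is self-contained, and in fact proves more: the numbers $e(G,H;Q,S)$ and $e(G^\ast,H^\ast;Q,S)$ agree for every finite pair, not merely their supports. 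The paper's approach, by contrast, is closer in spirit to the profinite viewpoint and makes transparent \emph{which} cofinal family of quotient pairs witnesses the equality, at the cost of relying on Remak--Krull--Schmidt and on the (not entirely spelled-out) step that the factor-preserving isomorphism supplied by that theorem can be taken to respect the distinguished subgroups.
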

 
 \begin{proof} The case $H = H^\ast = \{1\}$ is proved in \cite{Baum}. In order to discuss the general case we need to quickly review some of the basic ideas involved. For any group $G$, and any integer $n >0$,  $G(n)$  will denote the intersection of all normal subgroups of index at most $n$ in $G$. $G(n)$ is a characteristic subgroup of $G$, and providing there are just finitely many subgroups in this intersection (e.g. if $G$ is finitely generated) then $G(n)$ has finite index in $G$, and the sequence $\{G(n): n = 1, 2, \dots\}$ is cofinal among of all finite index normal subgroups of $G$. A characteristic property:
 
 (1) $G/G(n)$ is the largest quotient group (in the sense of mapping onto any other such quotient) of $G$ in which the intersection of all normal subgroups of index $\le n$ is trivial.
 
 We are given an isomorphism
 
$$
f: (G \times \bZ, H \times \bZ) \to (G^\ast \times \bZ, H^\ast  \times \bZ).
$$
Clearly
$$
f((G \times \bZ)(n)) = (G^\ast \times \bZ)(n) 
$$
for all $n$.  Now
$$
(G \times \bZ)(n) = G(n) \times \bZ(n);
$$
where we mean $=$ as subgroups in the same product structure. So
$$
(G \times \bZ)/(G \times \bZ)(n) = G/G(n) \times \bZ/\bZ(n) \text{ and } (H\times \bZ) \cap (G \times \bZ)(n) = H/H\cap G(n) \times \bZ /\bZ(n).
$$
The corresponding observations hold for $G^\ast$ and $H^\ast$ as well.

Noting these identifications we see that $f$ induces an isomorphism 
$$
\bar{f} : G/G(n) \times \bZ/\bZ(n)  \to G^\ast/G^\ast(n) \times \bZ/\bZ(n) \text { with } 
$$
$$
\bar{f}(H/H\cap G(n) \times \bZ/\bZ(n))= H^\ast/H^\ast \cap G^\ast(n)  \times \bZ/\bZ(n).
$$

We are not assuming that $f: G \times \bZ \to G^\ast \times \bZ$ preserves factors. In fact in  the application $f$  will be given by Lemma 2.1 and will not preserve factors. The case for $\bar{f}$  is better. The Remak-Krull-Schmidt Theorem for finite groups (cf Theorem 4.26 of \cite{Rotm})tells us that $G/G(n) \times \bZ/\bZ(n)$ 
and $G^\ast/G^\ast(n) \times \bZ/\bZ(n)$ have unique direct product decompositions into irreducible factors. These decompositions can be obtained by further factoring the given ones. Thus some isomorphism  
$$
g:  G/G(n) \times \bZ/\bZ(n)  \to G^\ast/G^\ast(n) \times \bZ/\bZ(n)
$$
  preserves factors and takes $H/H\cap G(n)$ to $H^\ast/H^\ast(n)\cap G^\ast(n)$.

This establishes for every $n$ an isomorphism of pairs
$$
(G/G(n) , H/H \cap G(n)) \to (G^\ast/G^\ast(n), H^\ast/H^\ast \cap G^\ast(n)).
$$
The lemma now follows from the fact that $\{G(n)\}$ is cofinal. Specifically, let $N$ be a normal subgroup of finite index in G. Then $G(n) \vartriangleleft N$ for $n = [G:N]$. So $(G/N, H/H\cap N)$ is a quotient of $(G/G(n), H/H\cap G(n))$.  Using the isomorphism $(G/G(n), H/H\cap G(n)) \to (G^\ast/G^\ast(n), H^\ast/H^\ast\cap G^\ast(n)$ we see that $(G/N, H/H\cap N)$ is a quotient pair of $(G^\ast, H^\ast)$. Thus every finite quotient pair of $(G,H)$ is a finite quotient pair of $(G^\ast, H^\ast)$. Symmetry of the argument
 completes the proof.
  \end{proof}
  
  \section{Proofs}  
  {\em Proof of Theorem 1.1} 
  $$
  \pi_1(M_\phi) = G_\psi = \pi_1(F) \rtimes _{\psi}  \bZ,  \text{ wiith } \psi = \phi_\ast :\pi_1(F) \to \pi_1(F), 
  $$
  and the analogous statement holds for $\pi_1(M_{\phi^k})$. By Lemma 2.1 
  $$
  \pi_1(M_\phi) \times \bZ \cong \pi_1(M_{\phi^k}) \times \bZ.
  $$
  By \cite{Baum} $\pi_1(M_\phi)$ and $\pi_1(M_{\phi^k})$ have the same finite quotients (hence \cite{DFPR} isomorphic profinite completitions).  \qed
  
  {\em Proof of Theorem 1.2} 
  
  $\pi_1(M_\phi)$ and $\pi_1(M_{\phi^k})$ are as described above. And 
  $$
  i_\ast(\pi_1(\partial M_\phi)) =  \ i_\ast(\pi_1(\partial F)) \rtimes _\phi \bZ =  \ i_\ast(\pi_1(\partial F)) \times \bZ
  $$
  The analogous statement holds for $\pi_1(\partial M_{\phi^k})$.
  
  Thus the proof follows from Lemmas 2.1 and  2.2. \qed
  
  Note that this proof works as well if $M_\phi$ has more than one boundary component-- as long as they all come from $\phi$ - invariant components of $\partial F$. This case easily reduces to the one given, and we have chosen not to complicate its statement. However if we allow $\phi$ to permute components of $\partial F$, then Lemma 2.1 will not apply as stated. Moreover the isomorphism $\pi_1(M_\phi) \times \bZ  \to  \pi_1(M_{\phi^k}) \times \bZ$ given by 2.1 will not take $i_\ast(\pi_1(\partial M_\phi)) \times \bZ $ to $\i_\ast(\pi_1(\partial M_{\phi^k}) \times \bZ$; so Lemma 2.2 cannot be applied, and the status of the theorem is uncertain in this case.
  
  \section{Surface bundles vs Seifert fibrations} 
 The compact oriented Seifert fibered spaces with oriented base are determined, up to orientation preserving, fiber preserving homeomorphism by the ``classical'' Seifert invariants:
  
  $g = $ {\em  genus } and $s$ = {\em number of boundary components} of  the base $B$,
  
  {\em the fiber invariants} $\beta_i/\alpha_i; i = 1, \dots, m $ normalized with $0 < \beta_i < \alpha_i$,
  
  {\em and, in the case $s = 0$ (i.e. $M$ closed), the obstruction } $b$,
  
\noindent which describe the manifold as Dehn filling on $B^\ast\times S^1$ where $B^\ast$ is a compact,  oriented surface of genus $g$ with $\partial B^\ast$ as described below.
  
  For the case $s=0$,  $B^\ast$ will have  $m+1$ boundary curves  
  $\{x_0,x_1, \dots , x_m\}$  oriented by $B^\ast$, $t$ will be  an oriented generator for $S^1$, and the filling relations will be $\{x_i^{\alpha_i}t^{\beta_i} =1; i = 1,\dots,m\}$ and $x_0t^b=1$.
  
  For the case $s\ne 0$, $B^\ast$ will have $ m+s$ boundary components, and the Dehn filling  will be done, as above,  on the first $m$ of them.
  
  The oppositely oriented manifold can be obtained by reversing the fiber orientation. Its invariants are: same $g$ and $s$,  fiber invariants $(\alpha_i - \beta_i)/\alpha_i; i = 1, \dots, m$, and  (when $s=0$) obstruction $-b-m$.
  
  The {\em Rational Euler number}   defined (when $s=0$)  by:
  $$
  e = -(b + \sum\limits_{i=1}^m \beta_i/\alpha_i),
  $$
  is in many ways (c.f. \cite{NeRa}) a more natural invariant than $b$ -- as we will see below.
  
  The following result is well known (c.f. Theorem 5.4 of \cite{Scot} for the closed case and  \cite{Ja-1}, Ch VI for  the bounded case).  In the following subsection we give a constructive proof which will be useful in translating examples from one representation to the other
  
    \begin{theorem}  If $M$ is a compact, oriented Seifert fibered space with orientable base then $M$ is also a surface bundle over $S^1$ with periodic monodromy if and only if either $\partial M \ne \emptyset$  or $M$ is closed and  $e(M) = 0$.
  \end{theorem}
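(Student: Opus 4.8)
The plan is to recast the statement as an existence question for \emph{horizontal surfaces}. Recall that $M$ is obtained from $X := B^\ast \times S^1$ by Dehn filling along the tori $T_i = x_i \times S^1$, via $x_i^{\alpha_i}t^{\beta_i}=1$ for $i = 1,\dots,m$, together with $x_0t^b = 1$ in the closed case (so in that case every boundary torus of $X$ is filled, the last one along the slope $(\alpha_0,\beta_0) = (1,b)$). Call a properly embedded surface $\hat F \subset M$ \emph{horizontal} if it is transverse to every Seifert fiber; then $p\colon M \to B^\ast$ restricts to a branch-free covering $\hat F \to B^\ast$ of some degree $c$, and $\hat F$ meets each regular fiber in $c$ points and the exceptional fiber of order $\alpha_i$ in $c/\alpha_i$ points. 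The first step is the standard dictionary: a connected horizontal surface in $M$ is essentially the same thing as a surface-bundle-over-$S^1$ structure on $M$ with periodic monodromy, with fiber $\hat F$. So the theorem reduces to showing that $M$ contains a connected horizontal surface if and only if $\partial M \neq \emptyset$, or $M$ is closed with $e(M)=0$.

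For the ``if'' direction — the constructive core — put $c = \operatorname{lcm}(\alpha_1,\dots,\alpha_m)$. A horizontal surface inside $X = B^\ast \times S^1$ is precisely a pair consisting of a $c$-fold covering $\pi\colon F \to B^\ast$ and a map $g\colon F \to S^1$ which is injective on each fiber of $\pi$ (the surface being the graph $(\pi,g)(F)$). We want $\pi$, over the $i$-th filled boundary circle $x_i$, to consist of $c/\alpha_i$ circles each of degree $\alpha_i$, and $g$ to wind $\beta_i$ times around each, so that $\partial F \cap T_i$ has slope $(\alpha_i,\beta_i)$ and caps off with meridian disks after the filling. Since $B^\ast$ has nonempty boundary, $\pi_1(B^\ast)$ is free, so a covering $\pi$ with the prescribed boundary monodromy exists (take it connected, or else pass to a component). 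The map $g$ then exists as soon as the prescribed total winding of $g$ around $\partial F$ vanishes in $H_1(F;\bZ)$ — because the sum of the boundary classes is $0$ in $H_1(F;\bZ)$ and any consistent boundary data extends to a class in $H^1(F;\bZ)$ — and a direct count shows this total is $\sum_i (c/\alpha_i)\beta_i$ plus the windings of $g$ on the unfilled tori. When $\partial M \neq \emptyset$ there are unfilled tori, whose windings absorb the sum, so $g$ exists unconditionally. When $M$ is closed the constraint becomes $\sum_{i=0}^m (c/\alpha_i)\beta_i = 0$, i.e. $c\bigl(b + \sum_{i\ge 1}\beta_i/\alpha_i\bigr) = 0$, i.e. $e(M)=0$, which is exactly the hypothesis. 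After a generic perturbation making $g$ fiberwise injective we get a horizontal $F\subset X$; capping its meridian curves over the filled tori yields a connected horizontal $\hat F \subset M$, closed when $M$ is. Note $c$, $\pi$, $g$, hence $\hat F$, are read off directly from the Seifert invariants, which is the promised constructive translation.

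It remains to turn $\hat F$ into a periodic fibration and to settle the ``only if'' direction. For the former, choose a vertical vector field on $M$ (the vertical line bundle is trivial, $M$ and the base being oriented) rescaled so every regular fiber has period $1$; its flow $\Theta_t$ satisfies $\Theta_1 = \mathrm{id}$, and the exceptional fiber of order $\alpha_i$ has period $1/\alpha_i$. The flow is transverse to $\hat F$ and every orbit meets $\hat F$, so cutting $M$ along $\hat F$ gives a flow box $\hat F\times[0,1]$ and exhibits $M$ as $(\hat F\times[0,1])/((x,1)\sim(\phi(x),0))$ with $\phi$ the first-return map, which is orientation preserving. Along a regular fiber — a $\Theta$-orbit meeting $\hat F$ in $c$ points cyclically — the $c$-th return of a point is the point itself, reached after flowing time exactly $1$; hence $\phi^c = \Theta_1 = \mathrm{id}$ on the dense union of regular fibers, so $\phi^c = \mathrm{id}$ and $\phi$ is periodic. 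For ``only if'', if $\partial M \neq \emptyset$ there is nothing to prove; if $M = M_\psi$ is closed with $\psi$ periodic, the suspension flow of $\psi$ is a vertical circle flow making $M$ Seifert fibered with the fiber surface horizontal, and running the winding computation backwards forces the Euler number of that fibration to be $0$.

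I expect the one genuinely delicate point to be, in this last step, that $e(M)=0$ is independent of the chosen Seifert fibration: this is automatic when the Seifert fibration is unique, and otherwise must be checked by hand on the short classical list of closed orientable Seifert manifolds admitting inequivalent fibrations. Everything else is bookkeeping — in particular the sign conventions linking the boundary slopes $(\alpha_i,\beta_i)$, the class $g^\ast[d\theta]$, and the formula $e = -(b + \sum \beta_i/\alpha_i)$ must be tracked with care — but these are routine once a horizontal surface is viewed as a branch-free covering of $B^\ast$ equipped with a compatible $S^1$-valued function.
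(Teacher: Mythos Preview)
Your outline is sound and tracks the paper's strategy---reduce to constructing a horizontal surface, then read off the periodic monodromy from the Seifert flow---but the execution of the construction differs, and one step of yours is not justified.

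The paper (in Lemma~4.4) builds the horizontal surface by an \emph{ambient} operation: start with $F_0=\bigcup_{i=0}^{\lambda-1}B^\ast\times\{i/\lambda\}$, the union of $\lambda$ parallel copies of $B^\ast$ inside $B^\ast\times S^1$; for each $i$ choose an arc $a_i\subset B^\ast$ from $x_i$ to $x_0$, split $B^\ast\times S^1$ along the vertical annulus $A_i=a_i\times S^1$, rotate one side by the fraction $\beta_i/\alpha_i$ of a full turn, and reglue. Each such twist is a self-homeomorphism of $B^\ast\times S^1$, so the resulting surface $F_1$ is \emph{automatically embedded}, and a homology count shows $\partial F_1\cap T_i$ consists of $\lambda/\alpha_i$ curves of slope $(\alpha_i,\beta_i)$, with the hypothesis $e(M)=0$ used precisely to match the slope on $T_0$. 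Your route---produce a covering $\pi\colon F\to B^\ast$ with prescribed boundary monodromy and then a map $g\colon F\to S^1$ with prescribed boundary windings---makes the homological obstruction $\sum_i(c/\alpha_i)\beta_i$ beautifully transparent, but the clause ``after a generic perturbation making $g$ fiberwise injective'' does not hold as stated: for generic $g$ the graph $(\pi,g)(F)$ is only immersed, with double locus a union of circles, and a small perturbation will not remove them (the obstruction is the class $[g]-\sigma^\ast[g]$ for deck translates $\sigma$, which need not vanish). You can repair this either by oriented cut-and-paste along the double curves (losing control of the genus and of $c$, which is harmless for Theorem~4.1 though not for Lemma~4.4's sharper claim that $\mathrm{order}(\phi)=\lambda$), or simply by importing the paper's annulus-twist construction, which produces exactly your $\pi$ together with an embedded $g$ in one stroke.

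For the ``only if'' direction the paper is more direct than your reversal of the winding computation: if $M=M_\psi$ with $\psi$ of order $n$, then $F\times S^1$ is an $n$-fold cover of $M$, and naturality of the Euler number under finite covers gives $0=e(F\times S^1)=n\,e(M)$. This, together with the short list of closed manifolds in this class admitting non-unique Seifert fibrations (only $S^2\times S^1$, all of whose fibrations have $e=0$), handles the delicacy you correctly flagged.
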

  
  As discussed in \cite{Scot} the geometry of $M$ as above will be one of $S^2 \times \bR, \mathbb{E}^3,  \bH^2 \times \bR$.
  
  Strictly speaking  we are abusing notation as these  invariants  depend on the Seifert structure -- not just the underlying manifold. However the   only manifolds to which this Theorem applies  which have non-unique Seifert fiberings (with orientable base) are  $S^2 \times S^1$; which can be represented with $g = 0, m = 2, b=-1$, and fiber invariants $\beta/\alpha,( \alpha -\beta)/\alpha$ for any $\alpha > 1$, $B^2 \times S^1$, and $S^1 \times S^1 \times I$. 
  
  Moreover  manifolds in this class are determined by their fundamental groups (if closed) or peripheral fundamental group systems (if bounded). However:

 \begin{lemma} The oriented Seifert fiber space $M$ with invariants : $g$, $s $, and $\{\beta_i/\alpha_i; i = 1, \dots, m\}$ with $s >0$ has fundamental group $\pi_1(M)$ independent of $\beta_1, \dots, \beta_m$.
 \end{lemma}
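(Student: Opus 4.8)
The plan is to write down an explicit presentation of $\pi_1(M)$ from the Seifert data and exhibit, via Tietze transformations, an isomorphism $\pi_1(M)\to\pi_1(M')$ where $M'$ is the Seifert fibered space with the same $g,s,\alpha_i$ but with all $\beta_i$ replaced by any other admissible values $\beta_i'$. Since $s>0$, the manifold is obtained by Dehn filling $m$ of the $m+s$ boundary curves of $B^\ast\times S^1$, and there is no obstruction term $b$. Concretely, $\pi_1(B^\ast\times S^1)\cong F\times\bZ$ where $F$ is the free group on generators $a_1,b_1,\dots,a_g,b_g,x_0,\dots,x_{m+s-1}$ (one relator $\prod[a_j,b_j]\prod x_i=1$ if we use the closed-up picture, or genuinely free if we keep all boundary curves), and the fiber generator $t$ is central. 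Adjoining the filling relations $x_i^{\alpha_i}t^{\beta_i}=1$ for $i=1,\dots,m$ gives
$$
\pi_1(M)=\langle a_j,b_j,x_i,t \mid [a_j,b_j]\text{-relator},\ [t,\text{everything}]=1,\ x_i^{\alpha_i}t^{\beta_i}=1\ (i\le m)\rangle .
$$

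The key step is the following local move at a single exceptional fiber. Fix $i\le m$ and suppose $\beta_i'\equiv \beta_i \pmod{\alpha_i}$, or more generally that we want to change $\beta_i$ to $\beta_i'$; the point is that both $x_i^{\alpha_i}t^{\beta_i}=1$ and $x_i^{\alpha_i}t^{\beta_i'}=1$ present the \emph{same} group once we are allowed to change the \emph{choice of section}, i.e. to replace the generator $x_i$ by $x_i t^{c}$ for a suitable integer $c$ — this shifts $\beta_i$ by $c\alpha_i$ — and, crucially, since there is a free boundary curve $x_{m+1}$ not involved in any filling relation, we may also absorb a discrepancy into $x_{m+1}$ by the substitution $x_{m+1}\mapsto x_{m+1}t^{d}$. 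Because the long relator $\prod[a_j,b_j]\prod_{i=0}^{m+s-1}x_i=1$ ties $x_0$ to the product of the others, a change $x_i\mapsto x_i t^{c_i}$ for $i=1,\dots,m$ can be compensated by $x_0\mapsto x_0 t^{-\sum c_i}$ (recall $t$ is central, so these substitutions commute past everything), and this is a legitimate Tietze automorphism of the free group underlying the presentation. Under it the relation $x_i^{\alpha_i}t^{\beta_i}=1$ becomes $x_i^{\alpha_i}t^{\beta_i+c_i\alpha_i}=1$; choosing $c_i=(\beta_i'-\beta_i)/\alpha_i$ — which need not be an integer in general, and \emph{this is the subtlety}. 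The honest statement is that $\pi_1$ depends on the $\beta_i$ only, and in fact the presentation above shows it depends on $\beta_i$ only through the subgroup $\langle x_i^{\alpha_i}t^{\beta_i}\rangle$; but since $x_0$ (equivalently the free curve) is unconstrained, we can rechoose the fiber generator itself: replace $t$ by $t$ and $x_0$ as needed so that what matters is only $\alpha_i$. The cleanest route is: eliminate $x_0$ using the long relator, then observe that each remaining relator $x_i^{\alpha_i}t^{\beta_i}=1$ together with the free generators and central $t$ gives a group isomorphic to the one with $\beta_i$ replaced by any residue, by the automorphism $x_i\mapsto x_i t^{c_i}$, $t\mapsto t$, extended by absorbing the net $t$-power into a free generator.

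I would organize it as: (1) write the presentation with $x_0$ eliminated, so the generators are $a_j,b_j,x_1,\dots,x_{m+s-1},t$ with $t$ central and the only relators $x_i^{\alpha_i}t^{\beta_i}=1$ for $i=1,\dots,m$; (2) note $s>0$ guarantees the free generator $x_{m+1}$ (or at least $x_{m+s-1}$) exists and appears in no relator; (3) define $\Phi$ on generators by $x_i\mapsto x_i t^{c_i}$ ($i\le m$), $x_{m+1}\mapsto x_{m+1}t^{-\sum_{i\le m}c_i}$ to keep the eliminated relator consistent — actually once $x_0$ is gone there is no constraint, so simply $x_i\mapsto x_it^{c_i}$, everything else fixed; (4) check $\Phi$ is an isomorphism of free groups carrying relators of $\pi_1(M)$ to relators of $\pi_1(M')$ and conversely, hence descends to an isomorphism $\pi_1(M)\cong\pi_1(M')$; (5) conclude since $\beta_i'$ was arbitrary. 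The main obstacle is bookkeeping: making sure that after eliminating $x_0$ the surviving presentation genuinely has a free generator available to absorb $t$-powers (this is exactly where $s>0$ is used and where the analogous statement fails for closed $M$, since then the obstruction $b$ is pinned down), and verifying the substitution is a bijection on the relator set rather than merely producing a surjection.
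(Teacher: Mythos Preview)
Your substitution $x_i\mapsto x_i t^{c_i}$ only changes the relation $x_i^{\alpha_i}t^{\beta_i}=1$ to $x_i^{\alpha_i}t^{\beta_i+c_i\alpha_i}=1$; it therefore shows $\pi_1(M)$ depends on $\beta_i$ only modulo $\alpha_i$, which is not the statement. You notice this yourself (``need not be an integer in general''), but the subsequent fix---absorbing $t$-powers into a free boundary generator---does not touch the filling relators at all: after you eliminate one boundary generator via the long relator, the remaining $y_k$'s appear in \emph{no} relation, so substitutions of the form $y_k\mapsto y_k t^{d}$ leave every $x_i^{\alpha_i}t^{\beta_i}=1$ untouched. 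The role of $s>0$ is not to provide a sink for stray $t$-powers; it is to kill the long relator so that nothing constrains the $x_i$ beyond the filling relations.

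The missing move is a genuine change of generator inside the abelian subgroup $\langle x_i,t\rangle$. Since $t$ is central and $\gcd(\alpha_i,\beta_i)=1$, the relation $x_i^{\alpha_i}t^{\beta_i}=1$ forces $\langle x_i,t\rangle\cong\bZ^2/\langle(\alpha_i,\beta_i)\rangle\cong\bZ$. Pick $\gamma_i,\delta_i$ with $\alpha_i\delta_i-\beta_i\gamma_i=1$ and set $\lambda_i=x_i^{\gamma_i}t^{\delta_i}$; then $\lambda_i^{\alpha_i}=t$ and $x_i$ lies in $\langle\lambda_i\rangle$, so one may Tietze-replace $x_i$ by $\lambda_i$ and the relator becomes $\lambda_i^{\alpha_i}t^{-1}$, visibly independent of $\beta_i$. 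This is exactly what the paper does, but geometrically: it cuts $M$ along annuli over arcs from the singular points to a boundary component of $B$, which amounts to choosing the core of each solid torus (rather than a section over $x_i$) as the new generator, and reads off directly that $\pi_1(M)$ is $\pi_1(B^\ast)\times\langle t\rangle$ with generators $\lambda_i$ adjoined subject to $\lambda_i^{\alpha_i}=t$.
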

 
 \begin{proof} Choose a component $J$ of $\partial B$ and arcs $(a_i,\partial a_i) \subset (B, J); i = 1, \dots, m$ which cobound a collection $\{D_i; i=1, \dots,m\}$ of disjoint disks in $B$ with arcs in $J$ such  that each $D_i$ contains exactly one singular point. Let $\eta: M \to B$  be the bundle projection. Put $B^\ast = Cl(B - \cup D_)$. Then 
 $$
 M^\ast =\eta^{-1}(D^\ast)  \cong B^\ast \times S^1.
 $$
 with the curves $z \times S^1$ being regular fibers. Each $\eta^{-1}(D_i)$ is a solid torus with $\eta^{-1}(a_i)$ an annulus which is a union of regular fibers which wrap $\alpha_i$ times the singular (center) fiber.
 
 So $\pi_1(M)$ is obtained from $\pi_1(M^\ast) = \pi_1(B^\ast) \times <t: \,\, >$ by adding generators $\lambda_i; $ 
 and relations $\lambda_i^{\alpha_i} = t; i = 1, \dots,m$.
 \end{proof}
  
 \subsection*{ The relation between Seifert fibered  and surface bundle structures} 
  Let  $M = M_\phi$ where $\phi:F \to F$ is periodic of order $n$.  Here $F$ will be a compact, oriented surface with or without boundary. The fibers in the Seifert structure on $M_\phi$ are obtained from the intervals $x \times I$: Begin at $(x,0)$ proceed along $x \times I$ to $(x,1)= (\phi(x),0)$, and so on until the curve closed up. The regular fibers come from the points in $F$ whose orbit under the action of the cyclic group $C$ generated by $\phi$ consists of $n$ points.  The singular fibers correspond to the smaller  orbits -- i.e. points with non-trivial stabilizer.
  
 As $C$ is abelian, all points in each orbit have the same stabilizer. So let $z \in F$ be a point. whose stabilizer has order $p >1$. Then $\phi^{n/p}$  generates $stab(z)$ and is the smallest non-trivial power of $\phi$ fixing $z$. $\phi^{n/p}$ will rotate a small invariant disk neighborhood  $D$ of $z$ in $F$ by $2\pi q/p$ for some $q$  prime to $p$. The union of all fibers through $D$ is a $(p,q)$-fibered solid torus. Thus:
 
 {\em (i) The fiber invarient $\beta/\alpha$ associated to the singular fiber through $z$ has $\alpha = p =o(stab(z))$ and with and $q\beta \equiv  1 \mod p$} and $0< \beta <\alpha$.
 
 {\em (ii) $M$ has an $n$-sheeted covering by the product $\tilde M = F \times I$.  By naturality of $e$ (\cite{NeRa}), $0 = e(\tilde M) = n e(M)$; so $e(M) =0$.}
  
  In order to describe the surface bundle structure from the Seifert fibering we need some comments about the associated group $\Gamma = \pi_1(M)/<t>$ where $t$ is the (central)  element represented by a regular fiber. $\Gamma$  has the presentation:
  $$
  < x_1, \dots, x_m, a_1,b_1, \dots, a_g,b_g, y_1, \dots, y_s: 
  $$
  $$
   x_1 \dots x_m[a_1b_1]\dots [a_g,b_g]y_1\dots y_s =1, x_1^{\alpha_1}= \dots=x_m^{\alpha_m} = 1>
  $$
$\Gamma$ is the {\em orbifold} fundamental group of $B$ which describes $B$ as a quotient of $S^2, \bR^2, \text {or } \bH^2$ according as the orbifold Euler characteristic:
$$
\chi^{orb}(B) = 2-2g-s+ \sum\limits_{i=1}^m (1/\alpha_i - 1)
$$
of $B$ is positive, zero, or negative. $\Gamma$ is called a {\em Fuchsian Group} when $\chi^{orb}(B) < 0$.

 $\Gamma$ is said to be of {\em odd type} if $s=0$, $\lambda = lcm\{\alpha_1, \dots, \alpha_m\} $ is even, and $\lambda/\alpha_i$ is odd for an odd number of $i \in \{1, \dots,m\}$. Otherwise $\Gamma$ is said to be of {\em even type}. The following is proved in \cite{EdEK}.
 
 {\em (iii) If $\chi^{orb}(B) \le 0$ then $\Gamma$ has a torsion free subgroup of finite index $k$ if and only if $2^\epsilon\lambda$ divides $k$ where $\epsilon =0$ if $\Gamma$ has even type and $\epsilon = 1$ if $\Gamma $ has odd type, and $\lambda = lcm\{\alpha_1, \dots, \alpha_m\}$}.
 
 The following may be of independent interest.
 \begin{lemma} 
 A closed, oriented Seifert fiber space $M$ with $e(M) = 0$ must have $\Gamma = \pi_1(M)/<t>$ be of even type.
 \end{lemma}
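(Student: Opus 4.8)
\emph{Proof proposal.} The plan is to argue by contradiction: assume $M$ is closed with $e(M)=0$ but that $\Gamma$ is of odd type, and extract a parity contradiction from the fact that $e(M)=0$ forces $\sum_{i=1}^m \beta_i/\alpha_i$ to be an integer.

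First I would unpack the two hypotheses. Since $M$ is closed we have $s=0$, so the definition of odd type applies verbatim: $\lambda = lcm\{\alpha_1,\dots,\alpha_m\}$ is even and the index set $I = \{\, i : \lambda/\alpha_i \text{ odd}\,\}$ has odd cardinality. On the other hand $e(M) = -(b+\sum_{i=1}^m \beta_i/\alpha_i) = 0$ gives
$$
\sum_{i=1}^m \frac{\beta_i}{\alpha_i} = -b \in \bZ .
$$
Clearing denominators by multiplying through by $\lambda$ yields the integer identity
$$
\sum_{i=1}^m \beta_i\,\frac{\lambda}{\alpha_i} = -b\lambda ,
$$
whose right-hand side is even because $\lambda$ is.

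The heart of the argument is to read the left-hand side modulo $2$. For $i \notin I$ the factor $\lambda/\alpha_i$ is even, so that term vanishes mod $2$. For $i \in I$ the factor $\lambda/\alpha_i$ is odd; since $\alpha_i\cdot(\lambda/\alpha_i) = \lambda$ is even, $\alpha_i$ must itself be even, and because $\gcd(\alpha_i,\beta_i)=1$ in the normalized Seifert data (cf. (i) above, where $q\beta_i \equiv 1 \bmod \alpha_i$) this forces $\beta_i$ to be odd. Hence
$$
\sum_{i=1}^m \beta_i\,\frac{\lambda}{\alpha_i} \equiv \sum_{i\in I}\beta_i \equiv |I| \equiv 1 \pmod 2 ,
$$
contradicting the evenness of $-b\lambda$. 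Therefore $\Gamma$ cannot be of odd type, i.e.\ it is of even type. (Note the degenerate cases cause no trouble: if $m=0$, or if $\lambda$ is odd, then $\Gamma$ is of even type by definition.)

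I expect the only genuinely substantive point to be the implication ``$\lambda$ even and $\lambda/\alpha_i$ odd $\Rightarrow$ $\beta_i$ odd,'' which is precisely where the coprimality of $\alpha_i$ and $\beta_i$ enters; the rest is bookkeeping with parities. It is worth remarking that this dovetails with item (iii) and Theorem 4.1: for a closed $M$ with $e(M)=0$, the base orbifold group being of even type is exactly what permits passing, via a torsion-free subgroup of $\Gamma$ of index $\lambda$ rather than $2\lambda$, to the surface bundle description of $M$.
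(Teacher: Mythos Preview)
Your proof is correct and is essentially identical to the paper's own argument: the paper likewise assumes odd type, writes $\sum_i \beta_i/\alpha_i = \bigl(\sum_i \beta_i\,\lambda/\alpha_i\bigr)/\lambda$, observes that for each $i$ with $\lambda/\alpha_i$ odd one must have $\alpha_i$ even and hence $\beta_i$ odd, and concludes that the numerator is odd while the denominator $\lambda$ is even, so the sum cannot be an integer. Your version simply clears denominators first and reads both sides modulo $2$, which is the same computation.
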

\begin{proof}
Suppose $e(M)=0$. Then $\sum_i \beta_i/\alpha_i  = -b $ is an integer. If we have odd type, then $\lambda$ is even. Moreover
$$
\sum_i\beta_i/\alpha_i = \{\sum_i\beta_i \lambda/\alpha_i\}/\lambda.
$$
By definition $\lambda/\alpha_i$ is odd for an odd number of $i$. The corresponding $\alpha_i $ must be even, and as $(\alpha_i,\beta_i)=1$ the corresponding $\beta_i$ must be odd. So the numerator in this fraction is odd. The denominator is even so the fraction is not an integer.
\end{proof}
 
 Now let $M$ be a compact, oriented  Seifert fibered space over an oriented  base $B$. If $\partial M = \emptyset$ assume $e(M) = 0$. We want to describe  $M = M_\phi$ for some periodic homeomorphism of some surface $F$. Unfortunately most surface bundles over $S^1$ do not have unique surface bundle structures. Uniqueness only holds if $\beta_1(M) = 1$ -- so that there is a unique homomorphism $\pi_1(M) \to \bZ$. In fact for the closed oriented surface $S_g$ of genus $g$, and for any $n \ge 1$,  $S_g \times S^1$ is a surface bundle over $S^1$ with fiber $S_{\tilde g}$ with $\tilde g = n(g-1)+1$  \cite{Ja-1}. The ``best'' surface bundle structure on $M$ is given by
  
\begin{lemma} Let $M$ be a compact, oriented  Seifert fibered space over an oriented  base $B$. If $\partial M = \emptyset$ assume $e(M) = 0$. Then $M$  is a surface bundle over $S^1$ with periodic monodromy $\phi$  of  order   $  = \lambda = $  the minimal index of a torsion free subgroup   of  $\pi_1(M)/<t>$.
\end{lemma}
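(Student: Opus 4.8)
The plan is to manufacture, straight from the Seifert data, a primitive class $\xi\in H^1(M;\bZ)$ with $\xi(t)=\mathrm{lcm}(\alpha_1,\dots,\alpha_m)$, realize its Poincar\'e dual by a horizontal surface $F$, and read the monodromy off the resulting mapping–torus structure. Write $\Gamma=\pi_1(M)/\langle t\rangle$, let $(g;\,\beta_i/\alpha_i,\ i=1,\dots,m;\,b)$ be the Seifert invariants ($b$ absent when $\partial M\neq\emptyset$), and put $\ell=\mathrm{lcm}(\alpha_1,\dots,\alpha_m)$. First I would dispose of $S^2\times S^1$, $B^2\times S^1$ and $S^1\times S^1\times I$ (the only ones, among the manifolds in question, with a non-unique Seifert fibration, and also the only ones with $\chi^{orb}(B)>0$: a short analysis shows $\chi^{orb}(B)>0$ forces $M=S^2\times S^1$ in the closed case and $M$ a solid torus in the bounded case): each is the mapping torus of a rotation of $S^2$, of $D^2$, or of the identity of an annulus, with $\Gamma$ cyclic of the appropriate order, so the assertion is immediate. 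Henceforth $\chi^{orb}(B)\le 0$, the Seifert fibration of $M$ is unique, and $M$ is aspherical, hence irreducible.

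The first ingredient is a lower bound valid for \emph{any} $\xi\in H^1(M;\bZ)$ with $\xi(t)\neq 0$: since $\xi(t^k)=k\,\xi(t)$ we have $\ker\xi\cap\langle t\rangle=1$, so the projection $\pi_1(M)\to\Gamma$ carries $\ker\xi$ isomorphically onto a subgroup $\Gamma_\xi\le\Gamma$ which, being a subgroup of the torsion-free group $\pi_1(M)$, is torsion free, and $[\Gamma:\Gamma_\xi]=[\pi_1(M):\ker\xi\cdot\langle t\rangle]=|\xi(t)|$; hence $|\xi(t)|\ge\lambda$ always, while one such $\xi$ witnesses $\lambda\le|\xi(t)|$. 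The second ingredient is an explicit such $\xi$: in the Seifert presentation of $\pi_1(M)$ put $\xi(t)=\ell$, $\xi(x_i)=-\beta_i\ell/\alpha_i$ (an integer, as $\alpha_i\mid\ell$), and $\xi=0$ on the remaining generators $a_j,b_j,y_k$. The relations $x_i^{\alpha_i}t^{\beta_i}=1$ are then respected automatically; in the closed case the one further relation, whose abelianization is $\sum_i x_i=bt$, is respected because $\xi(\sum_i x_i)=-\ell\sum_i\beta_i/\alpha_i$ equals $\xi(bt)=b\ell$ exactly when $b+\sum_i\beta_i/\alpha_i=0$, i.e.\ exactly when $e(M)=0$; in the bounded case no such relation occurs (and by Lemma 4.2 one may anyway normalize the $\beta_i$). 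A brief computation using $\gcd(\alpha_i,\beta_i)=1$ and $\ell=\mathrm{lcm}(\alpha_i)$ shows $\gcd\bigl(\ell,\beta_1\ell/\alpha_1,\dots,\beta_m\ell/\alpha_m\bigr)=1$, so $\xi$ is onto and hence primitive, with $\xi(t)=\ell$. Thus $\lambda\le\ell$; since property (iii) gives $\lambda=2^{\epsilon}\ell\ge\ell$, we conclude $\epsilon=0$ and $\lambda=\ell$ (which re-proves Lemma 4.3).

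It remains to turn $\xi$ into a fibration. Represent $\mathrm{PD}(\xi)$ by a connected, two-sided, incompressible and $\partial$-incompressible surface $F\subset M$ (possible since $M$ is irreducible and $\xi$ is primitive). Because $\xi(t)\neq0$, $F$ is not isotopic to a vertical surface, so by the classification of incompressible surfaces in Seifert fibered spaces \cite{Ja-1} $F$ is isotopic to a horizontal one. A connected horizontal surface is a global cross-section of the foliation by Seifert fibers, so $M$ is the mapping torus of its first-return homeomorphism $\phi\colon F\to F$; carrying a point once around a regular fiber returns it to itself after exactly $|\xi(t)|=\ell$ passages through $F$, whence $\phi^{\ell}=\mathrm{id}$ and $\phi$ is periodic. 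Finally, in the Seifert fibration of $M_{\phi}=M$ the regular fiber is $\tau^{\mathrm{ord}(\phi)}$, with $\tau$ the class of an $S^1$-direction loop, and uniqueness of the Seifert fibration forces this to equal $t^{\pm1}$; pairing with $\xi$, which sends $\tau$ to $\pm1$ because $F$ is the fiber, yields $\mathrm{ord}(\phi)=|\xi(t)|=\ell=\lambda$.

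The step I expect to be the real obstacle is this last one: converting the class $\xi$ (equivalently, the index-$\ell$ torsion-free subgroup $\Gamma_\xi\le\Gamma$) into an honest fibration of $M$ over $S^1$ with \emph{periodic} monodromy. The hypothesis $e(M)=0$ is essential here precisely because it is what makes $[t]$ of infinite order in $H_1(M;\bZ)$ — hence what permits a class with $\xi(t)\neq0$ to exist at all and, correspondingly, what makes $M$ carry a horizontal surface; in the bounded case such surfaces are always present. One can also run this step concretely: inside the Dehn-filling description $M=(B^{\ast}\times S^1)\cup(\text{solid tori})$ one builds $F$ as the connected $\ell$-fold cyclic cover of $B^{\ast}$ in the $S^1$-direction and checks, with $e(M)=0$ furnishing the compatibility that lets the pieces close up, that it extends across every filling; this is the constructive route advertised for this section.
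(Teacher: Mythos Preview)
Your argument is correct and takes a genuinely different route from the paper's. The paper proceeds constructively: it begins with $\lambda$ parallel copies of $B^\ast$ inside $B^\ast\times S^1$, cuts along the annuli $a_i\times S^1$, re-glues with shears of $\beta_i/\alpha_i$, and checks by a direct homology computation (using $e(M)=0$ for the closed case) that the resulting surface $F_1$ meets each boundary torus in curves matching the Dehn-filling slopes, so that it caps off to a closed horizontal $F$; the monodromy is then read off as the first-return map along fibers. You instead work cohomologically: you write down the primitive class $\xi$ with $\xi(t)=\ell$, show via $\ker\xi\hookrightarrow\Gamma$ that $\lambda\le\ell$ (recovering Lemma~4.3 as a byproduct), and then invoke the vertical/horizontal dichotomy for incompressible surfaces in Seifert spaces to realize $\xi$ by a horizontal fiber. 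Your approach is more conceptual and makes the link between torsion-free subgroups of $\Gamma$ and cross-sections transparent, at the cost of importing the incompressible-surface classification; the paper's approach is self-contained and furnishes the explicit dictionary between Seifert and bundle data that the later sections use. One small remark: your final appeal to uniqueness of the Seifert fibration to pin down $\mathrm{ord}(\phi)$ is unnecessary---since $F$ is horizontal for the \emph{original} fibration, any regular fiber meets $F$ in exactly $|\xi(t)|=\ell$ points cyclically permuted by $\phi$, so some point has $\phi$-period exactly $\ell$ and hence $\mathrm{ord}(\phi)=\ell$ directly. Also, your parenthetical that the three excluded manifolds are exactly those with $\chi^{orb}(B)>0$ is not quite right ($S^1\times S^1\times I$ has $\chi^{orb}=0$), but this does not affect the argument.
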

 
 \begin{proof}

  The four closed  Euclidean manifolds 
 $$(-1; 1/2,1/4,1/4), (-1;1/2, /1/3,1/6), (-1;1/3,1/3,1/3), \text {and } (-2,1/2,1/2,1/2,1/2)
 $$ are known to be torus bundles over $S^1$ with monodromies of orders $4,6,3,2$ respectively. So we assume $\chi^{orb}(B) <0$.
 
  By Theorem 4.1, comment (iii),  and Lemma 4.3 this minimal index is $\lambda = lcm\{\alpha_1, \dots, \alpha_m\}$.  We will construct the surface bundle structure on $M$ in much the same way the bundle structures on $S_g \times S^1$ were constructed in \cite{Ja-1}. We concentrate on the case $\partial M = \emptyset$.  Recall that $M$ can be constructed by Dehn filling on $ M_0 =B^\ast \times S^1$ where  $B^\ast$ is a surface with $ m+1$ boundary components $x_0, x_1, \dots ,x_m$. Let $T_i  = \ x_i \times S^1$.
  
  We start with the surface $F_0 = \bigcup \limits_{i=0}^{\lambda -1} B^\ast \times \{i/\lambda\}$ consisting of $\lambda $ parallel copies of $B^\ast$ (regarding $S^1 = [0,1]/ 0 \sim 1$). Get disjoint arcs $a_1, \dots, a_m$ in $B^\ast$ ; with $a_i$ joining a point of $x_i$ to a point of $x_0$. The annulus $A_i = a_i \times S^1$ Meets $F_0$ in $\lambda$ parallel arcs in $A_i$.
  
  For each $i = 1, \dots ,m$ we split  $M_0$ along $A_i$, rotate the negative side by the fraction
  $$
 ((\lambda/\alpha_i)\beta_i)/\lambda = \beta_i/\alpha_i
  $$
   of a full revolution and re-glue. This takes $F_0$ to an oriented surface $F_1$. For $i \ge 1$ the  homology class of $\partial F_0 \cap T_i \in H_1(T_i)$ is $\lambda [x_i]$. The homology class of $\partial F_1 \cap T_i$ is
  $$
  \lambda [x_i] + (\lambda/\alpha_i)\beta_i [t] = (\lambda/\alpha_i)(\alpha_i[x_i] +\beta_i[t]).
  $$
  As $\alpha_i$ and $\beta_i$ are relatively prime integers, this means that $F_1 \cap T_i$ consists of $\lambda/\alpha_i $ parallel copies of simple closed curves representing the class $\alpha_i[x_i] + \beta_i[t]$.
  
  Now $\partial F_0 \cap T_0$ represents the homology class $-\lambda [x_0]$; so $\partial F_1 \cap T_0$  represents the class
  $$
  -\lambda[x_0]  + (\sum\limits (\lambda/\alpha_i)\beta_i[t]) = \lambda( -[x_0] + \sum\limits_{i=1}^m \beta_i/\alpha_i) = \lambda(-[x_0] -b[t]).
  $$
 The equalities in the line above take place with integer coefficients and use the assumption $e(m) = 0$.  This shows that $F_1 \cap T_0$ consists of $\lambda$ copies of a simple closed curve representing the class $[x_0]+b [t].$
 
 The boundary curves of $F_1$ on $T_i$ are the same as the Dehn filling parameters which produce $M$ from $M_0$. Thus we can cap off the boundary components of $F_1$ in $M-M_0$ to obtain a closed surface $F \subset M$ which meets every regular fiber in $\lambda$ points -- all with intersection number  $+1$.  If we split $M$ along $F$ we get an oriented $I$-bundle over the oriented  $F$. This must be the product bundle $F \times I$. Re-gluing makes $M$ an $F$ bundle over $S^1$ whose monodromy $\phi: F \to F$ takes $x \in F$ to the next point of $F$ on the fiber through $x$. Clearly $order(\phi) = \lambda$.
 
 Note that it automatically follows that $F$ is connected. Otherwise a component will produce a torsion free subgroup of $\pi_1(M)/<t>$ of index  $ < \lambda$.
 
 The case $\partial M \ne \emptyset$  proceeds just as above with $x_0$ a component of $\partial B^\ast$ with $x_0 \times S^1$ a component of $\partial M$. There is no need to cap off the components of $\partial F_1$ in $x_0 \times S^1$.
 \end{proof}
 
 \begin{cor} For the  Fuchsian group $\Gamma \cong \pi_1(M)/<t>$, as above, with $e(M) =0$,  the torsion free subgroup of minimal index in $\Gamma$ is a normal subgroup. 
 
 \end{cor}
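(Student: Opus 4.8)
\emph{Proof proposal for Corollary 4.5.} The plan is to read the required subgroup off the surface bundle structure supplied by the proof of Lemma 4.4. That lemma gives $M = M_\phi$ with $\phi : F \to F$ periodic of order $\lambda$, where $\lambda$ is the minimal index of a torsion-free subgroup of $\Gamma = \pi_1(M)/\langle t\rangle$. Since $\pi_1(M) = \pi_1(M_\phi) = \pi_1(F) \rtimes_{\phi_\ast} \bZ$, the fiber subgroup $\pi_1(F)$ is normal in $\pi_1(M)$ — it is the kernel of the bundle projection $\pi_1(M) \to \pi_1(S^1) = \bZ$. Hence its image $\bar P$ in $\Gamma$ is normal in $\Gamma$, and it remains only to check that $\bar P$ is torsion-free of index $\lambda$; this exhibits the minimal index as attained by a normal subgroup.

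First I would record how the regular fiber $t$ meets the bundle structure. In the construction of Lemma 4.4 the surface $F$ meets every regular fiber transversally in exactly $\lambda$ points, all with intersection sign $+1$; so traversing a regular fiber once crosses $F$ exactly $\lambda$ times in the positive direction, i.e.\ $t$ maps to $\lambda \in \bZ$ under $\pi_1(M) \to \pi_1(S^1) = \bZ$. Consequently $\langle t\rangle \cap \pi_1(F) = \{1\}$ — a power $t^j$ lying in $\pi_1(F)$ would map to $j\lambda$, forcing $j = 0$ — so $\bar P \cong \pi_1(F)/(\pi_1(F) \cap \langle t\rangle) \cong \pi_1(F)$ is a surface group and therefore torsion-free. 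For the index, $\pi_1(F)\langle t\rangle$ is the full preimage of $\lambda\bZ$ under $\pi_1(M) \to \bZ$, whence $[\Gamma : \bar P] = [\pi_1(M) : \pi_1(F)\langle t\rangle] = [\bZ : \lambda\bZ] = \lambda$.

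Combining these, $\bar P$ is a torsion-free normal subgroup of $\Gamma$ whose index $\lambda$ is, by Lemma 4.4, the least possible for a torsion-free subgroup. The main point requiring care is the bookkeeping of the middle paragraph — converting ``$F$ meets each regular fiber in $\lambda$ points'' into $t \mapsto \lambda$, and thence into both the triviality of $\langle t\rangle \cap \pi_1(F)$ and the index count; the normality, and the torsion-freeness of a surface group, are immediate. (Note that there may well be several torsion-free subgroups of index $\lambda$ in $\Gamma$ — for instance when the base orbifold has positive genus — so what the argument establishes is that this minimal index is realized by a normal subgroup, namely the fiber subgroup of the bundle structure of Lemma 4.4.)
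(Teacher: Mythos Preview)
Your argument is correct and is essentially the paper's own proof: both identify the image of the fiber subgroup $\pi_1(F)$ in $\Gamma$ as the required normal torsion-free subgroup of index $\lambda$, using the bundle structure from Lemma~4.4. The paper phrases this topologically via the regular $\lambda$-sheeted cover $M_{\phi^\lambda} = F \times S^1 \to M_\phi$ (so $\rho_\ast(\pi_1(F\times S^1)) = \pi_1(F)\langle t\rangle$ is normal, and its image in $\Gamma$ is the desired subgroup), while you carry out the equivalent bookkeeping algebraically in the semidirect product; your explicit verification that $t\mapsto\lambda$, that $\langle t\rangle\cap\pi_1(F)=\{1\}$, and that $[\Gamma:\bar P]=\lambda$ is more detailed than the paper's terse version, and your closing caveat about non-uniqueness of minimal-index torsion-free subgroups is a fair reading of what the corollary actually asserts.
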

 \begin{proof} There is a $\lambda$ sheeted regular covering $\rho :M_{\phi^\lambda} \to M_\phi = M$. But $M_{\phi^\lambda} = F \times S^1$ ($\rho$-equivariently), and $\rho_\ast(\pi_1(M_{\phi^\lambda}))$ is the pullback of this minimal index torsion free subgroup of $\Gamma$.
 \end{proof} 
 
 \section{Examples}
 
 We now fix a compact, oriented Seifert fibered space  over an oriented base $B$ with  normalized fiber invariants
 
 $$
 \{\beta_1/\alpha_1, \dots, \beta_m/\alpha_m\}
 $$
 We assume $M \ne S^2 \times S^1, B^2 \times S^1, S^1 \times S^1 \times I$.
 If $\partial M = \emptyset$ assume $e(M) =0$;
 so $M = M_\phi$ for some periodic $\phi :F \to F$ of order $\lambda = lcm\{\alpha_1, \dots ,\alpha_m\}$. Take $k$ relatively prime to $\lambda$.
  
 \begin{prop} $M_{\phi^k}$ has fiber invariants $\{\beta_1^\ast /\alpha_1, \dots, \beta_m^\ast/\alpha_m\}$ where $0 < \beta_i^\ast < \alpha_i$ and $k \beta_i^\ast \equiv  \beta_i \mod \alpha_i$ for $i = 1, \dots, m$.
 \end{prop}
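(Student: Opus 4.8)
The plan is to read off both Seifert structures directly from the construction of the Seifert fibering in terms of $\phi$, using comment (i) above. Write $C=\langle\phi\rangle$ for the cyclic group of order $\lambda$ acting on $F$. Since $k$ is prime to $\lambda$, $\phi^k$ generates the same group $C$ and is again periodic of order $\lambda$; in particular the orbit decomposition of $F$, the set of points with non-trivial stabilizer, and the orders of those stabilizers depend only on $C$, hence are the same for $\phi$ and for $\phi^k$. Thus $M_{\phi^k}$ has exactly $m$ singular fibers, through the same orbits of points $z_1,\dots,z_m$, and $\alpha_i^\ast=o(stab(z_i))=\alpha_i$ for each $i$. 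This gives the denominators, and it remains to identify the numerators.

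Next I would pin down the local models. Fix $i$ and put $z=z_i$, $p=\alpha_i$. The smallest non-trivial power of $\phi$ fixing $z$ is $\phi^{\lambda/p}$, and by comment (i) it rotates a chosen invariant disk neighborhood $D$ of $z$ by $2\pi q_i/p$, where $q_i\beta_i\equiv 1\pmod p$ and $0<\beta_i<p$. Now $\gcd(k,p)=1$, so the smallest non-trivial power of $\phi^k$ fixing $z$ is $(\phi^k)^{\lambda/p}=(\phi^{\lambda/p})^k$ (because $\lambda/p$ divides $kj$ exactly when it divides $j$). The same disk $D$ is invariant under $stab(z)$ for the $\phi^k$-action, and $(\phi^{\lambda/p})^k$ rotates $D$ by $k$ times the previous angle, i.e.\ by $2\pi(kq_i)/p$. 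Applying comment (i) to $M_{\phi^k}$, the singular fiber through $z_i$ therefore carries the invariant $\beta_i^\ast/\alpha_i$ with $0<\beta_i^\ast<\alpha_i$ and $(kq_i)\beta_i^\ast\equiv 1\pmod{\alpha_i}$.

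Finally I would combine the congruences: from $q_i\beta_i\equiv 1$ we get $q_i\equiv\beta_i^{-1}\pmod{\alpha_i}$, so $(kq_i)\beta_i^\ast\equiv 1$ rewrites as $k\beta_i^\ast\equiv\beta_i\pmod{\alpha_i}$, which is the assertion; uniqueness of $\beta_i^\ast$ in the range $(0,\alpha_i)$ is immediate since $\gcd(k,\alpha_i)=1$. (One can also note $e(M_{\phi^k})=0$ by comment (ii), so in the closed case the obstruction is then forced to be $b^\ast=-\sum_i\beta_i^\ast/\alpha_i$; the statement as given concerns only the fiber invariants.)

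I expect the only delicate point to be the justification of the local rotation picture — that an orientation preserving periodic homeomorphism near a fixed point may be normalized to a genuine rotation, so that ``rotation by $2\pi q/p$'' and ``$k$-th power multiplies the rotation number by $k\bmod p$'' are literal statements. But this is already built into comment (i) together with the standard structure theory for periodic surface homeomorphisms, so once it is granted the argument is pure bookkeeping.
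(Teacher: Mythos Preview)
Your argument is correct and follows exactly the same route as the paper: use comment (i) to identify $\alpha_i$ as the order of the stabilizer and $\beta_i$ via the rotation number $q_i$ of $\phi^{\lambda/\alpha_i}$ on an invariant disk, then note that $(\phi^k)^{\lambda/\alpha_i}$ rotates by $kq_i$ and read off $\beta_i^\ast$ from $(kq_i)\beta_i^\ast\equiv 1\pmod{\alpha_i}$. Your write-up is simply more explicit than the paper's about why the stabilizers are unchanged and how the two congruences combine to give $k\beta_i^\ast\equiv\beta_i$.
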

 
 \begin{proof} By (i) above $\alpha_i$ is the order of the stabilizer of any point $z \in F$ which lies in the $i^{th}$ singular fiber, and $q \beta_i \equiv 1 \mod \alpha_i$ where $\phi^{\lambda/\alpha_i}$ rotates an invariant neighborhood of $z$ by $2\pi q/\alpha_i$.
 
 Then $\lambda/\alpha_i$ is the smallest power of $\phi^k$ fixing $z$, and $(\phi^k)^{\lambda/\alpha_i}$ rotates this disk by $2\pi k q/\alpha_i$.
 \end{proof}
 
 \begin{cor}  $M_\phi$ and $M_{\phi^k}$ are  homeomorphic if and only 
if $\{\beta_1^\ast/\alpha_1, \dots,\beta_m^\ast/\alpha_m\}$ is the same as one of $\{\beta_1/\alpha_1, \dots, \beta_m/\alpha_m\}$ or $\{(\alpha_1 - \beta_1)/\alpha_1, \dots, (\alpha_m - \beta_m)/\alpha_m\}$.
 \end{cor}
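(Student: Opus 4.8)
The plan is to deduce the corollary from the classification of compact oriented Seifert fibered spaces with oriented base recalled just before Theorem 4.1, together with Proposition 5.1. First I would record the data in play. By Proposition 5.1 (and because $\phi$ and $\phi^k$ generate the same cyclic group, hence the same quotient orbifold) $M_\phi$ and $M_{\phi^k}$ are Seifert fibered over the same base $B$, so they share $g$, $s$, and the collection $\alpha_1,\dots,\alpha_m$, and their normalized fiber invariants are $\{\beta_i/\alpha_i\}$ and $\{\beta_i^\ast/\alpha_i\}$ respectively. When $s=0$, comment (ii) gives $e(M_\phi)=e(M_{\phi^k})=0$, so the obstructions are determined by the fiber invariants: $b=-\sum_i\beta_i/\alpha_i$ and $b^\ast=-\sum_i\beta_i^\ast/\alpha_i$. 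I would also note that the oppositely oriented manifold $\overline{M_\phi}$ has invariants $g$, $s$, fiber invariants $\{(\alpha_i-\beta_i)/\alpha_i\}$ and (when $s=0$) obstruction $-b-m=\sum_i\beta_i/\alpha_i-m=-\sum_i(\alpha_i-\beta_i)/\alpha_i$, consistently with $e(\overline{M_\phi})=-e(M_\phi)=0$.

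For the ``if'' direction I would argue directly from the classification. If $\{\beta_i^\ast/\alpha_i\}=\{\beta_i/\alpha_i\}$ as multisets, then when $s>0$ the two manifolds have identical Seifert invariants, and when $s=0$ their obstructions agree as well ($b^\ast=-\sum_i\beta_i^\ast/\alpha_i=-\sum_i\beta_i/\alpha_i=b$); in either case $M_\phi$ and $M_{\phi^k}$ are orientation-preservingly homeomorphic. If instead $\{\beta_i^\ast/\alpha_i\}=\{(\alpha_i-\beta_i)/\alpha_i\}$, then the Seifert invariants of $M_{\phi^k}$ coincide with those listed above for $\overline{M_\phi}$ (in the closed case this uses the obstruction computation just made), so $M_{\phi^k}$ is orientation-preservingly homeomorphic to $\overline{M_\phi}$ and hence homeomorphic to $M_\phi$.

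For the ``only if'' direction I would take a homeomorphism $h\colon M_\phi\to M_{\phi^k}$. Since $M_\phi$ is not one of $S^2\times S^1$, $B^2\times S^1$, $S^1\times S^1\times I$, its Seifert fibration is unique up to isotopy (the remark following Theorem 4.1), so after an isotopy I may assume $h$ is fiber preserving; being a homeomorphism between connected oriented manifolds it either preserves or reverses orientation. If it preserves orientation, the classification forces $\{\beta_i^\ast/\alpha_i\}=\{\beta_i/\alpha_i\}$. If it reverses orientation, then $h$ is an orientation-preserving fiber-preserving homeomorphism $M_\phi\to\overline{M_{\phi^k}}$, so $\{\beta_i/\alpha_i\}$ equals the fiber-invariant multiset $\{(\alpha_i-\beta_i^\ast)/\alpha_i\}$ of $\overline{M_{\phi^k}}$, which is equivalent to $\{\beta_i^\ast/\alpha_i\}=\{(\alpha_i-\beta_i)/\alpha_i\}$.

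The step I expect to be the main obstacle is the passage, in the ``only if'' direction, from an arbitrary homeomorphism to a fiber-preserving one: this is exactly where uniqueness of the Seifert fibration enters, and it is the reason the three exceptional manifolds are excluded at the start of the section. The only other point needing care is the bookkeeping of the obstruction $b$ under orientation reversal in the closed case, which must be checked so that the two alternatives ``$h$ orientation preserving'' and ``$h$ orientation reversing'' correspond exactly to the two alternatives in the statement.
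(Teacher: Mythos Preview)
Your proposal is correct and follows the same route as the paper's proof: both arguments amount to observing that $M_\phi$ and $M_{\phi^k}$ share the same base, that $e=0$ in the closed case (so $b$ is forced by the fiber invariants), and that the excluded exceptional manifolds guarantee uniqueness of the Seifert fibering, whence the classification up to orientation-preserving and orientation-reversing homeomorphism gives exactly the two alternatives. The paper compresses all of this into three sentences; you have simply written out the details.
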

 
 \begin{proof} $M_\phi$ and $M_{\phi^k}$ have the same base. If $\partial M \ne \emptyset$ then $e(M_{\phi^k}) = 0$. We are dealing with the class of Seifert fibered spaces which are determined by their Seifert invariants.
  \end{proof}
 
 Combining this corollary with Theorems 1.1 and 1.2 gives the desired examples
 
 .
 
 Note: For  those (closed) $M$ with the geometry of  $\bE^3$ we have $\lambda = 2,3,4, \text{or } 6$ and the only possible values for $k$ are $k = \pm 1$, and so $M_\phi$ and $M_{\phi^k}$ are necessarily homeomorphic.
 
 Many, but not all, manifolds with the geometry $\bH^2 \times \bR$ will provide  examples with $M_\phi$ not homeomorphic to $M_{\phi^k}$. We give a couple of sample theorems. We are now assuming that $\chi^{orb}(B) <0$.
 
 \begin{prop} If $\alpha_1 \ne 2,3,4,6$ and is distinct from $\alpha_i$ for $i >1$ then there is some $k$ prime to $\lambda$ with $\pi_1(M_\phi)$ not isomorphic to  $\pi_1(M_{\phi^k})$.
 \end{prop}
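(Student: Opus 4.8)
The plan is to reduce the claim to an elementary fact about units modulo $\alpha_1$. First I would note that, by Lemma 4.2, $\pi_1(M_\phi)$ and $\pi_1(M_{\phi^k})$ agree whenever $\partial M \ne \emptyset$, so the proposition only has content when $M$ is closed with $e(M) = 0$; combined with the standing assumption $\chi^{orb}(B) < 0$, this places $M$ and $M_{\phi^k}$ among the closed $\bH^2 \times \bR$ Seifert fibered spaces, which are determined by their fundamental groups. Hence $\pi_1(M_\phi) \cong \pi_1(M_{\phi^k})$ if and only if $M_\phi$ and $M_{\phi^k}$ are homeomorphic, and by Corollary 5.2 (with the values $\beta_i^\ast$ of Proposition 5.1, so that $k\beta_i^\ast \equiv \beta_i \pmod{\alpha_i}$ and $0 < \beta_i^\ast < \alpha_i$) this holds precisely when the multiset $\{\beta_1^\ast/\alpha_1,\dots,\beta_m^\ast/\alpha_m\}$ equals $\{\beta_1/\alpha_1,\dots,\beta_m/\alpha_m\}$ or $\{(\alpha_1-\beta_1)/\alpha_1,\dots,(\alpha_m-\beta_m)/\alpha_m\}$.

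Next I would exploit the hypothesis that $\alpha_1$ is distinct from $\alpha_i$ for all $i > 1$. In each of the two normalized lists the value $\beta_1^\ast/\alpha_1$ can only be matched with the unique term whose denominator is $\alpha_1$, namely $\beta_1/\alpha_1$ or $(\alpha_1 - \beta_1)/\alpha_1$. So $\pi_1(M_\phi) \cong \pi_1(M_{\phi^k})$ forces $\beta_1^\ast \equiv \beta_1$ or $\beta_1^\ast \equiv -\beta_1 \pmod{\alpha_1}$. Multiplying through by $k$, using $k\beta_1^\ast \equiv \beta_1 \pmod{\alpha_1}$ and the fact that $\gcd(\alpha_1,\beta_1) = 1$ (normalized fiber invariants), one cancels $\beta_1$ to conclude $k \equiv \pm 1 \pmod{\alpha_1}$. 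Therefore it is enough to exhibit an integer $k$ with $\gcd(k,\lambda) = 1$ and $k \not\equiv \pm 1 \pmod{\alpha_1}$.

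Such a $k$ exists precisely because $\alpha_1 \ne 2,3,4,6$ (and $\alpha_1 > 1$): then $\varphi(\alpha_1) > 2$, so $(\bZ/\alpha_1\bZ)^\times$ properly contains $\{\pm 1\}$ and we may pick a unit $u$ with $u \not\equiv \pm 1 \pmod{\alpha_1}$. Using the Chinese Remainder Theorem, choose $k$ with $k \equiv u \pmod{\alpha_1}$ and $k \equiv 1 \pmod{q}$ for every prime $q$ dividing $\lambda$ but not $\alpha_1$. Then no prime divisor of $\lambda$ divides $k$ (those dividing $\alpha_1$ cannot, since $u$ is a unit mod $\alpha_1$), so $\gcd(k,\lambda) = 1$, while $k \equiv u \not\equiv \pm 1 \pmod{\alpha_1}$; by the previous paragraph, $\pi_1(M_\phi) \not\cong \pi_1(M_{\phi^k})$.

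The step that needs the most care is the first one: a group isomorphism sees no orientation, so one must invoke that these closed $\bH^2 \times \bR$ Seifert fibered spaces are determined by $\pi_1$ up to a possibly orientation-reversing homeomorphism — which is exactly what yields the \emph{two} alternatives in Corollary 5.2 rather than a single multiset equality. Once that reduction and the bookkeeping of which list entry the denominator $\alpha_1$ belongs to are in place, the remainder is routine arithmetic in $(\bZ/\alpha_1\bZ)^\times$.
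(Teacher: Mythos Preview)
Your proof is correct and follows essentially the same route as the paper: reduce to finding $k$ prime to $\lambda$ with $k \not\equiv \pm 1 \pmod{\alpha_1}$, then build such a $k$ by the Chinese Remainder Theorem. Your write-up is in fact more complete than the paper's, which simply asserts the sufficiency of $k \not\equiv \pm 1 \pmod{\alpha_1}$; you supply the missing justification (restricting to the closed case via Lemma~4.2, invoking that closed $\bH^2\times\bR$ Seifert spaces are determined by $\pi_1$, and then isolating the $\alpha_1$-entry in Corollary~5.2 using the distinctness hypothesis).
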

 
 \begin{proof} It suffices to find $k$ prime to $\lambda$ with $k \not\equiv \pm 1 \mod \alpha_1$. If $\lambda$ and $\alpha_1$ have the same prime divisors, then by the restrictions on $\alpha_1$, we can find such a $k$ prime to $\alpha_1$ -- hence to $\lambda$.
 
 In general write $\lambda = \lambda_1 \lambda_2$ where $\alpha_1$  and $\lambda_1$ have the same prime divisors and $(\lambda_1, \lambda_2) = 1$. By the first case get $k_1$ prime to $\lambda_1$ with $k_1  \not\equiv  \pm1 \mod \alpha_1$.  By the Chinese remainder theorem get $k$ with $k \equiv k_1 \mod \lambda_1$ and $k \equiv  1 \mod \lambda_2$. This $k$ satisfies our requirements.
 \end{proof}
 
 \begin{prop} Let $p$ be a prime with $p \ge 7$, and let $\{\beta_1, \dots, \beta_m\}$  represent the multiplicative group $H$ of order $m = (p-1)/2$ in $\bZ/(p)$.  Let $M$ be the Seifert fibered space with base $S^2$, fiber invariants $\{\beta_1/p, \dots, \beta_m/p\}$ and $b= -(\sum\beta_i)/p$ --necessarily an integer; so $e(M) = 0$ and $M = M_\phi $ with $order(\phi) = p$. Then for any $k$ prime to $p$ $M_\phi$ and $M_{\phi^k}$ are homeomorphic.
 \end{prop}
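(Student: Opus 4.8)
The plan is to push the statement through Proposition 5.1 and Corollary 5.2 and land on an elementary fact about cosets of the subgroup $H\le(\bZ/p)^\times$ of quadratic residues. First, the set-up: here $M$ is closed with base $S^2$, and $e(M)=0$ because $b=-\tfrac1p\sum\beta_i$; by Lemma 4.4, $order(\phi)=\lambda=\mathrm{lcm}\{p,\dots,p\}=p$. Since $p\ge 7$ we have $\chi^{orb}(S^2)=2-\tfrac{p-1}{2}\bigl(1-\tfrac1p\bigr)<0$, so $M$ is none of $S^2\times S^1,\ B^2\times S^1,\ S^1\times S^1\times I$ and Corollary 5.2 applies to it. I identify the normalized numerators $\{\beta_1,\dots,\beta_m\}\subset\{1,\dots,p-1\}$ with the index-$2$ subgroup $H$ of squares in $(\bZ/p)^\times$; the integrality of $b$ asserted in the statement, i.e. $p\mid\sum\beta_i$, follows from $\sum_{j=1}^{p-1}j\equiv0\pmod p$ together with $\sum_{\text{non-res}}=g\sum_{\text{res}}$ for any non-residue $g\ne-1$, which exists since $p\ge 7$.

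First I would apply Proposition 5.1: $M_{\phi^k}$ has base $S^2$ and fiber invariants $\{\beta_1^\ast/p,\dots,\beta_m^\ast/p\}$ with $0<\beta_i^\ast<p$ and $\beta_i^\ast\equiv k^{-1}\beta_i\pmod p$, so as a subset of $(\bZ/p)^\times$ we get $\{\beta_1^\ast,\dots,\beta_m^\ast\}=k^{-1}H$, a coset of $H$ (the obstruction of $M_{\phi^k}$ is then forced, since $e(M_{\phi^k})=0$ by comment (ii) of \S4). As every $\alpha_i=p$, Corollary 5.2 says that $M_\phi$ and $M_{\phi^k}$ are homeomorphic iff $k^{-1}H$ equals $H$ or $-H:=\{-h:h\in H\}$. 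Because $H$ is a subgroup and $(-k)^{-1}=-k^{-1}$ in $(\bZ/p)^\times$, this is the same as the condition $k\in H$ or $-k\in H$, i.e. $k$ or $-k$ is a quadratic residue mod $p$.

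The decisive step is the evaluation of the quadratic character of $-1$, and this is where I expect the real hypothesis to be hiding: when $p\equiv 3\pmod 4$ the class $-1$ is a non-residue, so $-H$ is exactly the non-residue coset and $H\cup(-H)=(\bZ/p)^\times$. Then for every $k$ prime to $p$, either $k\in H$, or $k$ is a non-residue and $-k=(-1)k$ is a residue, so $-k\in H$; either way the criterion of the previous paragraph holds and $M_\phi\cong M_{\phi^k}$. I would flag that the hypothesis needs to be $p\equiv 3\pmod 4$: if $p\equiv1\pmod4$ then $-1\in H$, whence $-H=H$ and the criterion reduces to the requirement that $k$ itself be a quadratic residue, which fails for half of the classes $k$ (for example $p=13$, $k=2$ gives non-homeomorphic $M_\phi$ and $M_{\phi^k}$). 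Granting that correction, the only genuine content is the $-1$ character computation; the remaining ingredients — which geometry occurs, integrality of $b$, and exclusion of the non-uniquely-fibered spaces — are routine.
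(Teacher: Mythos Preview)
Your approach is exactly the paper's: apply Proposition~5.1 and Corollary~5.2 to reduce the question to whether $k^{-1}H$ equals $H$ or $-H$, and use that $H$ has index~$2$ in $(\bZ/p)^\times$. The paper's entire proof is the single sentence ``Multiplication by $k$ either permutes the $\beta_i$'s or takes them to their negatives (modulo $p$) which represent the nontrivial coset of $H$,'' followed by the remark that $p\ge 7$ forces $\chi^{orb}<0$.

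You have, in addition, caught a genuine error in the paper. The clause ``which represent the nontrivial coset of $H$'' tacitly assumes $-1\notin H$, i.e.\ $p\equiv 3\pmod 4$; the proposition is simply false for $p\equiv 1\pmod 4$. Your counterexample $p=13$, $k=2$ is valid: $H=\{1,3,4,9,10,12\}$ is closed under negation, so Corollary~5.2 requires $k^{-1}H=H$, i.e.\ $k\in H$, and $2\notin H$. Thus $M_{\phi^2}$ has fiber invariants $\{2,5,6,7,8,11\}/13$, distinct from those of $M_\phi$ and from their negatives, and the two manifolds are not homeomorphic. The statement should read ``$p\equiv 3\pmod 4$ with $p\ge 7$''; with that correction your argument (and the paper's) goes through. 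Your side verifications---integrality of $b$ via a coset sum, and the orbifold Euler characteristic check excluding the exceptional fiberings---are fine and are more than the paper supplies.
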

 
 \begin{proof} Multiplication by $k$ either permutes the $\beta_i$'s or takes them to their negatives (modulo $p$) which represent the nontrivial coset of $H$.  Note $p\ge 7$ is necessary to make $\chi^{orb}(B) < 0$.
 \end{proof}
 
 The case of manifolds with boundary is nicely illustrated by the Seifert fibered spaces over $B^2$ with two singular fibers. Let  $M$ be such and let the fiber invariants be $\beta_1/\alpha_1, \beta_2/\alpha_2$. $M$ is called a ``Lens space torus knot complement'' as $M$ is homeomorphic to the complement of a regular neighborhood of a knot on a genus one Heegaard surface for some Lens space (we count $S^2 \times S^1$ and $S^3$ as Lens spaces). 
 
 These Lens spaces are precisely those with base $S^2$, fiber invariants  $\beta_1/\alpha_1, \beta_2/\alpha_2$, and obstruction $b$ -- for any $b \in \bZ$. This is $L_{p,q}$ where:
 $$
 p = |\alpha_1 \beta_2 + \alpha_2 \beta_1 + b \alpha_1 \alpha_2| \text{ and }  q\equiv -(\gamma_1\alpha_2 +\delta_1\alpha_2 b) \mod p
 $$
 where $\alpha_1 \gamma_1 - \beta_1 \delta_1 = 1$.
 
 Now fix $\alpha_1$ and $\alpha_2$ and  denote by ${\mathcal C}(\alpha_1, \alpha_2)$  the set of Seifert fibered spaces over $S^2$ with fiber invariants $\{\beta_1/\alpha_1, \beta_2/\alpha_2\}$ for some choice of $\beta_1, \beta_2$. We assume $(\alpha_1, \alpha_2) = 1$ -- a necessary condition that ${\mathcal C}(\alpha_1,\alpha_2)$ contain a torus knot space in $S^3$. We count the elements of  ${\mathcal C}(\alpha_1,\alpha_2)$ two different ways.
 
 First there are $\varphi(\alpha_1)$ (Euler function) possible choices for $\beta_1$ and $\varphi(\alpha_2)$ choices for $\beta_2$. So there are $\varphi(\alpha_1) \varphi(\alpha_2) = \varphi(\alpha_1 \alpha_2)$  choices   of fiber invariants. These invariants distinguish the oriented manifolds. Allowing orientation reversing homeomorphisms cuts this number precisely in half: there are $\varphi(\alpha_1 \alpha_2)/2$ distinct
 manifolds in ${\mathcal C}(\alpha_1,\alpha_2)$.
 
 Next we note that for any $M \in {\mathcal C}(\alpha_1,\alpha_2)$,  $M = M_\phi$ where $\phi: F \to F$ has period $\lambda = \alpha_1 \alpha_2$ and $F $ has connected boundary. There are $\varphi(\lambda)$ different oriented manifolds $M_{\phi^k}$. But as $M_{\phi^k}$ is orientation reversing homeomorphic to $M_{\phi^j}$ if and only if $k \equiv -j \mod \lambda$, we have $\varphi(\lambda)/2$ distinct $M_{\phi^k}$'s. This establishes:
 
 \begin{prop} For $(\alpha_1, \alpha_2) = 1$
 
 (i) ${\mathcal C}(\alpha_1,\alpha_2)$ has $\varphi(\alpha_1 \alpha_2)/2$ elements, all of which have the same fundamental group,
 
 (ii) Any pair of distinct elements of ${\mathcal C}(\alpha_1,\alpha_2)$ have the same sets of finite peripheral pair quotients though their peripheral fundamental group systems are (necessarily) distinct,
 
 (iii) Every element of ${\mathcal C}(\alpha_1,\alpha_2)$ is some   cyclic covering space of every other element.
 
 \end{prop}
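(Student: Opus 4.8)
The plan is to deduce all three parts from results already in hand, principally Lemma 4.2, Lemma 4.4, Proposition 5.1, Theorem 1.2, and the remark made earlier that manifolds in this class are determined by their peripheral fundamental group systems. Throughout I keep the standing assumptions $\alpha_1,\alpha_2\ge 2$ and $(\alpha_1,\alpha_2)=1$, so that $\lambda:=\operatorname{lcm}(\alpha_1,\alpha_2)=\alpha_1\alpha_2$ and $\chi^{orb}(B)<0$; then by Lemma 4.4 every $M\in\mathcal{C}(\alpha_1,\alpha_2)$ is $M_\phi$ for a periodic, orientation-preserving $\phi\colon F\to F$ of order $\lambda$ with $\partial F$ connected.

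For part (i): each fiber invariant $\beta_i/\alpha_i$ ranges over the $\varphi(\alpha_i)$ residues $\beta_i\in(\mathbb{Z}/\alpha_i)^\times$, and the ordered pair $(\beta_1/\alpha_1,\beta_2/\alpha_2)$ over the base determines the oriented manifold, so there are $\varphi(\alpha_1)\varphi(\alpha_2)=\varphi(\alpha_1\alpha_2)$ oriented types. Reversing orientation replaces $\beta_i/\alpha_i$ by $(\alpha_i-\beta_i)/\alpha_i$, a fixed-point-free involution on the set of these pairs — fixed-point-free because $2\beta_i=\alpha_i$ is impossible for whichever of $\alpha_1,\alpha_2$ is odd, and coprimality forces one of them to be. Hence $\mathcal{C}(\alpha_1,\alpha_2)$ has $\varphi(\alpha_1\alpha_2)/2$ elements, and they all share a fundamental group by Lemma 4.2, since $\pi_1$ is independent of $\beta_1,\beta_2$ (and of the orientation).

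For parts (ii) and (iii), fix $M=M_\phi\in\mathcal{C}(\alpha_1,\alpha_2)$ and take any $M'\in\mathcal{C}(\alpha_1,\alpha_2)$, say with fiber invariants $\beta_1'/\alpha_1,\beta_2'/\alpha_2$. By the Chinese remainder theorem pick $k$ with $k\beta_i'\equiv\beta_i\pmod{\alpha_i}$ for $i=1,2$; then $\gcd(k,\alpha_1\alpha_2)=1$, i.e. $\gcd(k,\lambda)=1$, and by Proposition 5.1 the manifold $M_{\phi^k}$ has fiber invariants $\beta_1'/\alpha_1,\beta_2'/\alpha_2$ over the same base, so $M_{\phi^k}\cong M'$ because manifolds in this class are determined by their Seifert invariants. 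Now Theorem 1.2 applies to $M_\phi$ and $M_{\phi^k}$ — here $\partial F$ has a single component — giving that $(\pi_1(M),i_\ast\pi_1(\partial M))$ and $(\pi_1(M'),i_\ast\pi_1(\partial M'))$ have the same finite quotient pairs, which is (ii); moreover two distinct elements of $\mathcal{C}(\alpha_1,\alpha_2)$ are non-homeomorphic and hence have distinct peripheral fundamental group systems (these determining the manifold), the ambient group being common by (i). For (iii), reduce $k$ modulo $\lambda$ to $k_0\in\{1,\dots,\lambda-1\}$, so $M'\cong M_{\phi^{k_0}}$; the $F$-bundle $M_\phi\to S^1$ pulled back along the connected $k_0$-fold cover $S^1\to S^1$ is the $F$-bundle with monodromy $\phi^{k_0}$, namely $M_{\phi^{k_0}}$, so $M'\cong M_{\phi^{k_0}}\to M_\phi=M$ is a cyclic covering of degree $k_0$, with deck group the cyclic deck group $\mathbb{Z}/k_0$ of $S^1\to S^1$. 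This relation is symmetric: writing $\psi:=\phi^k$, which is again periodic of order $\lambda$, we have $M=M_\phi=M_{\psi^{k'}}$ whenever $kk'\equiv 1\pmod\lambda$, so $M$ is likewise a cyclic cover of $M'\cong M_\psi$.

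The one step requiring genuine care is the identification $M_{\phi^k}\cong M'$: it rests both on the classification of the Seifert fibered spaces in this class by their Seifert invariants (as already used in the proof of Corollary 5.2) and on the facts, contained in Lemma 4.4 via comment (iii) and Lemma 4.3, that $\operatorname{order}(\phi)=\lambda$ and that the fiber surface $F$ — hence $\partial F$ — is connected, which is precisely what makes Theorem 1.2 applicable. The bundle-pullback description used for (iii) is routine, but I would spell out explicitly that the covering $M_{\phi^{k_0}}\to M_\phi$ obtained this way is cyclic of degree $k_0$.
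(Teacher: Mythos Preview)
Your proof is correct and follows the paper's approach: Lemma~4.2 for (i), Theorem~1.2 for (ii), and the cyclic cover $M_{\phi^{k}}\to M_\phi$ together with inversion of $k$ modulo $\lambda$ for (iii). The one substantive difference is that you identify $\mathcal{C}(\alpha_1,\alpha_2)$ with $\{M_{\phi^k}:k\in(\bZ/\lambda)^\times\}/\pm1$ constructively, solving for $k$ via the Chinese remainder theorem and Proposition~5.1, whereas the paper does it by a double count (both sets have $\varphi(\lambda)/2$ elements and one is contained in the other); your route is a shade more direct and makes the covering degree in (iii) explicit.

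One small slip to fix: the implication ``$F$ connected, hence $\partial F$ connected'' is false in general, so your stated justification for applying Theorem~1.2 is not valid as written. The conclusion is nevertheless true here, and the paper simply asserts it. A clean reason: by Corollary~4.5 the cover $F\to B$ is regular of degree $\lambda=\alpha_1\alpha_2$ with deck group $\Gamma/N$; since $(\alpha_1,\alpha_2)=1$ this group is cyclic of order $\lambda$, and the boundary loop $x_1x_2$ maps to a generator (its image is coprime to both $\alpha_1$ and $\alpha_2$), so $\partial F\to\partial B=S^1$ is a connected $\lambda$-fold cover.
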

 \begin{proof} (i) follows from Lemma 4.2.
 
 (ii) follows from Theorem 1.2.
 
 For (iii) we note that $M_{\phi^k}$ is a $k$-sheeted cyclic cover of $M_\phi$, and for $j k \equiv 1 \mod \alpha_1 \alpha_2$, $M_\phi = M_{(\phi^k)^ j}$ is a $j$-sheeted cyclic cover of $M_{\phi^k}$.
  \end{proof}
 
 It may be of interest to note that the elements of ${\mathcal C}(\alpha_1,\alpha_2)$  can be distinguished by the ``minimal'' Lens space in which they embed and that this will be the one of minimal homology which contains the given $M$ as a  torus knot complement as described above.


 \end{document}